\documentclass[11pt,reqno]{amsart}

\usepackage[utf8]{inputenc}

\usepackage{amsmath,amssymb,amsthm}
\usepackage{mathrsfs}

\usepackage{indentfirst}
\usepackage{setspace}
\usepackage{enumerate}

\usepackage{graphicx}
\usepackage{tikz}
\usepackage{float}

\usepackage{xcolor}
\usepackage{colortbl}
\usepackage{multirow}

\usepackage[
    colorlinks=true,
    linkcolor=purple,
    citecolor=blue,
    urlcolor=magenta
]{hyperref}

\definecolor{sectionlink}{RGB}{0,100,200}

\newtheorem{theo}{Theorem}[section]
\newtheorem{lem}{Lemma}[section]

\newcommand{\ol}{\overline}

\numberwithin{equation}{section}

\begin{document}

\title[Sharp Coefficient Estimates for the Exponential Starlike class $\mathcal{S}_{ex}^{\ast}$]{Sharp Coefficient Estimates for the Exponential Starlike class $\mathcal{S}_{ex}^{\ast}$}
\author[P. Das and N. Sarkar]{Pradip Das and Nabadwip Sarkar}
\address{Department of Mathematics, Raiganj University, Raiganj, West Bengal-733134, India.}
\email{pradipsmath@gmail.com}
\address{Amity School of Applied Sciences, Amity University Mumbai, Panvel, Navi Mumbai, Maharashtra-410206, India}
\email{nsarkar@mum.amity.edu}
\makeatletter
\@namedef{subjclassname@2020}{\textup{2020} Mathematics Subject Classification}
\makeatother

\subjclass[2020]{Primary 30C45; Secondary 30C50, 30C55, 30C80.}
\keywords{Univalent functions, inverse logarithmic coefficients, inverse logarithmic Hankel determinants, Fekete--Szeg\H{o} inequalities, Hermitian--Toeplitz determinants, Ma--Minda subclasses.}

\begin{abstract}
Let $\mathcal{S}_{ex}^{\ast}$ denote the class of normalized analytic functions $f$ in the unit disk satisfying
\[
\frac{zf'(z)}{f(z)} \prec e^{\alpha z},\qquad 0<\alpha\le 1.
\]
We obtain sharp bounds for the initial inverse logarithmic coefficients $\Gamma_1$, $\Gamma_2$, and $\Gamma_3$. In particular, the bound for $|\Gamma_3|$ has a four-branch structure with transition values
\[
\alpha_\star \approx 0.542712,\qquad
\alpha_b \approx 0.679103,\qquad
\alpha_c \approx 0.691095.
\]
We also establish sharp upper and lower bounds for the successive coefficient difference
\[
|\Gamma_2|-|\Gamma_1|,
\]
for the inverse logarithmic Hankel determinant
\[
H_{2,1}\bigl(F_{f^{-1}}/2\bigr),
\]
and for the third-order Hermitian--Toeplitz determinant $T_{3,1}(f)$, whose sharp lower bound changes at
\[
\alpha_H=\frac{-1+\sqrt{241}}{15}\approx0.9683.
\]
Furthermore, we completely solve the sharp generalized Fekete--Szeg\H{o} problem for the functional
\[
|a_3-\lambda a_2^2|-\mu|a_2|.
\]
All estimates are sharp, and the corresponding extremal functions are explicitly constructed by using Carath\'eodory function representations.
\end{abstract}
\maketitle
\section{Introduction}

Let $\mathbb{D}:=\{z\in\mathbb{C}:|z|<1\}$ denote the open unit disk, and let $\mathcal{H}$ be the class of analytic functions in $\mathbb{D}$. A function $f\in\mathcal{H}$ belongs to the normalized class $\mathcal{A}$ if it has the form
\begin{equation}\label{eq1}
f(z)=z+\sum_{n=2}^{\infty}a_n z^n,\qquad z\in\mathbb{D}.
\end{equation}
that is, if it satisfies $f(0)=0$ and $f'(0)=1$. Let $\mathcal{S}\subset\mathcal{A}$ denote the usual subclass of univalent functions. We say that $f\in\mathcal{A}$ is starlike if its image $f(\mathbb{D})$ is starlike with respect to the origin. The class $\mathcal{S}^*$ of starlike functions is characterized analytically by
\[
f\in\mathcal{S}^*
\iff
\Re\left(\frac{zf'(z)}{f(z)}\right)>0,\qquad z\in\mathbb{D}.
\]
For the basic theory of univalent functions, we refer to the monographs of Duren \cite{PLD1} and Goodman \cite{G}.

Let $\Omega$ denote the class of Schwarz functions $\omega$ analytic in $\mathbb{D}$ with $\omega(0)=0$ and $|\omega(z)|<1$ for $z\in\mathbb{D}$. A function $f$ is subordinate to another analytic function $g$, written $f\prec g$, if there exists $\omega\in\Omega$ such that
\[
f(z)=g(\omega(z)),\qquad z\in\mathbb{D}.
\]
When $g$ is univalent, this is equivalent to the geometric condition
\[
f(0)=g(0),\qquad f(\mathbb{D})\subset g(\mathbb{D}).
\]
The notion of subordination is classical and plays a central role in geometric function theory; see, for example, Pommerenke \cite{Pommerenke1971}.

Ma and Minda \cite{MM} used subordination to give a unified treatment of many starlike and convex classes. They introduced the class
\[
\mathcal{S}^{*}(\phi)
:=
\left\{
f\in\mathcal{S}:
\frac{zf'(z)}{f(z)}\prec \phi(z),\ z\in\mathbb{D}
\right\},
\]
where $\phi$ is analytic and univalent in $\mathbb{D}$, $\Re\phi(z)>0$, $\phi(0)=1$, $\phi'(0)>0$, and $\phi(\mathbb{D})$ is symmetric about the real axis.

Coefficient problems for univalent functions have a long history, beginning with the Bieberbach conjecture, which was ultimately proved by de Branges \cite{LDB1}. Earlier contributions include Goluzin's distortion theorems \cite{G1}, Hayman's work on successive coefficients \cite{H}, and Grinspan's estimates for adjacent coefficients \cite{G2}. Among the many coefficient functionals studied in the literature, logarithmic coefficients have received considerable attention because of their close links with the geometry of univalent mappings and with the Schwarzian derivative. For $f\in\mathcal{S}$, the logarithmic coefficients $\gamma_n$ are defined by
\[
\log\frac{f(z)}{z}
=
2\sum_{n=1}^{\infty}\gamma_n z^n,\qquad z\in\mathbb{D}.
\]
Recent results on logarithmic coefficients and their differences have been obtained, for instance, by Obradovi\'c and Tuneski \cite{Obradovic2024}, Lecko and Partyka \cite{Lecko2023}, and Kumar and Cho \cite{Kumar2023}. Sharp bounds for successive coefficients and their differences have also been investigated by Leung \cite{Leung1978}, Li and Sugawa \cite{LiSugawa}, Peng and Obradovi\'c \cite{Peng2019}, and Arora and collaborators \cite{Arora2019,Arora2022,Arora2023}.

Recently, subclasses of starlike functions associated with exponential mappings have attracted much interest. The class $\mathcal{S}_e^*$ corresponding to the exponential function $e^z$ was introduced by Mendiratta et al.\ \cite{Mendiratta2015}. Later, Panja et al.\ \cite{Panja2026} studied classes governed by
\[
\phi(z)=e^{\alpha z},\qquad 0<\alpha\le1.
\]
Accordingly, a normalized function $f\in\mathcal{A}$ belongs to the class $\mathcal{S}_{ex}^{\ast}$ if it satisfies
\begin{equation}\label{subordination}
\frac{zf'(z)}{f(z)}\prec e^{\alpha z},\qquad z\in\mathbb{D}.
\end{equation}
Sharp bounds for Hermitian--Toeplitz determinants in the class $\mathcal{S}_e^*$ were obtained by Sarkar \cite{Sarkar}, while inverse logarithmic coefficient bounds were recently established by Das and Sarkar \cite{DasSarkar2026}. Higher-order Schwarzian derivative estimates for related classes were studied in \cite{Das2026a,Srivastava2026}.

The function $\phi(z)=e^{\alpha z}$ satisfies all standard Ma--Minda conditions: it is univalent, has positive real part in $\mathbb{D}$, is symmetric with respect to the real axis, and maps $\mathbb{D}$ onto a convex oval-shaped domain. As $\alpha$ increases from $0$ to $1$, the domain $\phi(\mathbb{D})$ expands continuously from a neighborhood of $w=1$ to a larger convex region. This behavior is illustrated in Figure~\ref{fig_domain}.

\begin{figure}[htbp]
\centering
\begin{tikzpicture}[scale=1.8, >=stealth]
    \begin{scope}[shift={(-2.2,0)}]
        \draw[->, gray!70, line width=0.6pt] (-1.4,0) -- (1.5,0) node[right, black] {$\Re(z)$};
        \draw[->, gray!70, line width=0.6pt] (0,-1.4) -- (0,1.4) node[above, black] {$\Im(z)$};
        
        \fill[blue!6, opacity=0.5] (0,0) circle (1);
        \draw[thick, blue!70!black] (0,0) circle (1);
        
        \node at (0,0) [below left] {$0$};
        \fill (0,0) circle (1.2pt);
        \fill (1,0) circle (1.2pt) node[below right] {$1$};
        \fill (-1,0) circle (1.2pt) node[below left] {$-1$};
        
        \node at (0.4,0.4) [blue!70!black, font=\bfseries] {$\mathbb{D}$};
        \node at (0,-1.7) [black] {\small $z$-plane};
    \end{scope}

    \draw[->, thick, black!80] (-0.7,0.2) .. controls (-0.3,0.4) and (0.3,0.4) .. (0.7,0.2) 
        node[midway, above=2pt, black, font=\small] {$\phi(z) = e^{\alpha z}$};

    \begin{scope}[shift={(2.2,0)}]
        \draw[->, gray!70, line width=0.6pt] (-0.4,0) -- (3.3,0) node[right, black] {$\Re(w)$};
        \draw[->, gray!70, line width=0.6pt] (0,-1.4) -- (0,1.4) node[above, black] {$\Im(w)$};
        
        \fill[purple!15, opacity=0.4] plot[domain=0:360, samples=180, variable=\t] 
            ({exp(1.0*cos(\t))*cos(1.0*sin(\t) r)}, {exp(1.0*cos(\t))*sin(1.0*sin(\t) r)});
        \draw[thick, purple!80!black] plot[domain=0:360, samples=180, variable=\t] 
            ({exp(1.0*cos(\t))*cos(1.0*sin(\t) r)}, {exp(1.0*cos(\t))*sin(1.0*sin(\t) r)});

        \fill[purple!25, opacity=0.4] plot[domain=0:360, samples=180, variable=\t] 
            ({exp(0.75*cos(\t))*cos(0.75*sin(\t) r)}, {exp(0.75*cos(\t))*sin(0.75*sin(\t) r)});
        \draw[thick, dashed, purple!60!black] plot[domain=0:360, samples=180, variable=\t] 
            ({exp(0.75*cos(\t))*cos(0.75*sin(\t) r)}, {exp(0.75*cos(\t))*sin(0.75*sin(\t) r)});

        \fill[magenta!20, opacity=0.5] plot[domain=0:360, samples=180, variable=\t] 
            ({exp(0.5*cos(\t))*cos(0.5*sin(\t) r)}, {exp(0.5*cos(\t))*sin(0.5*sin(\t) r)});
        \draw[thick, dotted, magenta!80!black] plot[domain=0:360, samples=180, variable=\t] 
            ({exp(0.5*cos(\t))*cos(0.5*sin(\t) r)}, {exp(0.5*cos(\t))*sin(0.5*sin(\t) r)});
            
        \node at (0,0) [below left] {$0$};
        \fill (0,0) circle (1.2pt);
        \fill (1,0) circle (1.2pt) node[below left] {$1$}; 
        
        \fill ({exp(1)},0) circle (1.2pt) node[below right] {$e^{1}$};
        \fill ({exp(-1)},0) circle (1.2pt) node[above left=1pt] {\small $e^{-1}$};
        
        \node at (2.1,0.5) [purple!90!black, font=\scriptsize, fill=white, inner sep=1pt, rounded corners=1pt] {$\alpha=1.0$};
        \node at (1.5,0.75) [purple!70!black, font=\scriptsize] {$\alpha=0.75$};
        \node at (1.1,0.35) [magenta!90!black, font=\scriptsize] {$\alpha=0.5$};
        
        \node at (2.5,1.1) [purple!80!black, font=\bfseries] {$\phi(\mathbb{D})$};
        \node at (1.2,-1.7) [black] {\small $w$-plane};
    \end{scope}
\end{tikzpicture}
\caption{Evolution of the image domain $\phi(\mathbb{D})$ for the parameter values $\alpha \in \{0.5, 0.75, 1.0\}$.}
\label{fig_domain}
\end{figure}

\subsection{Inverse logarithmic coefficients}

Let $f\in\mathcal{S}$. The inverse logarithmic coefficients $\Gamma_n$, introduced by Ponnusamy et al.\ \cite{Ponnusamy2018}, are defined via the expansion of the inverse function $f^{-1}$:
\[
F_{f^{-1}}(w)
:=
\log\frac{f^{-1}(w)}{w}
=
2\sum_{n=1}^{\infty}\Gamma_n w^n
\]
near $w=0$. These coefficients are the natural inverse analogue of the logarithmic coefficients and have been studied extensively because of their connections with the Bieberbach conjecture and the geometry of univalent mappings. Several authors have investigated inverse logarithmic coefficients and related Hankel and Toeplitz determinants; see, for example, \cite{MandalAhamed2024,MandalAhamed2026,MandalAhamedZaprawa2025}.

The first four inverse logarithmic coefficients can be expressed directly in terms of the Taylor coefficients $a_n$ of $f$ as
\begin{equation}\label{IG1}
\begin{cases}
\Gamma_1=-\dfrac{1}{2}a_2,\\[8pt]
\Gamma_2=-\dfrac{1}{2}a_3+\dfrac{3}{4}a_2^2,\\[8pt]
\Gamma_3=-\dfrac{1}{2}\left(a_4-4a_2a_3+\dfrac{10}{3}a_2^3\right),\\[8pt]
\Gamma_4=\dfrac{35}{8}a_2^4-\dfrac{15}{2}a_2^2a_3+\dfrac{5}{2}a_2a_4+\dfrac{5}{4}a_3^2-\dfrac{1}{2}a_5.
\end{cases}
\end{equation}

Ponnusamy et al.\ \cite{Ponnusamy2018} proved that, for the full class $\mathcal{S}$, the inverse logarithmic coefficients satisfy the sharp bound
\[
|\Gamma_n|
\le
\frac{1}{2n}\binom{2n}{n},\qquad n\in\mathbb{N},
\]
with equality for the Koebe function and its rotations. This result motivates the search for sharper bounds within important subclasses of $\mathcal{S}$.

Hankel and Toeplitz determinants involving logarithmic coefficients have also been intensively investigated. These determinants provide useful information about the structure of coefficient sequences and have applications in operator theory, signal processing, and random matrix theory. Kowalczyk and Lecko~\cite{12,15} introduced the logarithmic Hankel determinants
\[
H_{q,n}\left(F_f/2\right)
=
\begin{vmatrix}
\gamma_n & \gamma_{n+1} & \cdots & \gamma_{n+q-1}\\
\gamma_{n+1} & \gamma_{n+2} & \cdots & \gamma_{n+q}\\
\vdots & \vdots & \ddots & \vdots\\
\gamma_{n+q-1} & \gamma_{n+q} & \cdots & \gamma_{n+2(q-1)}
\end{vmatrix}.
\]
The corresponding inverse analogue
\[
H_{q,n}\bigl(F_{f^{-1}}/2\bigr)
=
\begin{vmatrix}
\Gamma_n & \Gamma_{n+1} & \cdots & \Gamma_{n+q-1}\\
\Gamma_{n+1} & \Gamma_{n+2} & \cdots & \Gamma_{n+q}\\
\vdots & \vdots & \ddots & \vdots\\
\Gamma_{n+q-1} & \Gamma_{n+q} & \cdots & \Gamma_{n+2(q-1)}
\end{vmatrix}
\]
was studied in~\cite{10,16}. Hankel determinant bounds for special subclasses have also been obtained by Sim et al.\ \cite{STZ} and Tang et al.\ \cite{TAA,TAH}.

Similarly, Hermitian--Toeplitz determinants, which are Hermitian analogues of classical Toeplitz determinants, have become important in coefficient problems for analytic functions. The theory of Toeplitz matrices goes back to Hartman and Wintner \cite{HW}; see also Kucerovsky et al.\ \cite{KMS}. In the context of univalent functions, Hermitian--Toeplitz determinants have been studied for numerous subclasses, including starlike, convex, bounded turning, and Janowski-type functions; see \cite{Ali,C,K3,KKD2023,KPR,KumarCho2021,KSC,GJSS2023,TKBSKS2025,UFBCL2025}.

Although logarithmic coefficient problems and Hankel determinant bounds have been studied for some exponential subclasses~\cite{Panja2026}, the inverse logarithmic functionals for the full parametric family $\mathcal{S}_{ex}^{\ast}$ remain largely open. In particular, the dependence on the parameter $\alpha$ produces a multi-branch structure that requires a careful case-by-case optimization.

The purpose of this paper is to give a unified treatment of sharp coefficient estimates for the class $\mathcal{S}_{ex}^{\ast}$ governed by $\phi(z)=e^{\alpha z}$, $0<\alpha\le1$. More precisely, we establish the following sharp results.

\begin{enumerate}
\item Sharp upper bounds for the initial inverse logarithmic coefficients $\Gamma_1$, $\Gamma_2$, and $\Gamma_3$. A key feature of our work is the four-branch structure of the bound for $|\Gamma_3|$, with transition values
\[
\alpha_* \approx 0.542712,\qquad
\alpha_b \approx 0.679103,\qquad
\alpha_c \approx 0.691095.
\]

\item Sharp upper and lower bounds for the successive coefficient difference
\[
|\Gamma_2|-|\Gamma_1|.
\]

\item Sharp bounds for the second-order inverse logarithmic Hankel determinant
\[
H_{2,1}\bigl(F_{f^{-1}}/2\bigr)
=
\Gamma_1\Gamma_3-\Gamma_2^2.
\]

\item Sharp upper and lower bounds for the third-order Hermitian--Toeplitz determinant $T_{3,1}(f)$. The lower bound changes at
\[
\alpha_H = \frac{-1+\sqrt{241}}{15} \approx 0.9683,
\]
and it reduces to the known value $-\frac{1}{15}$ when $\alpha=1$, thereby extending previous results for the class $\mathcal{S}_e^*$.

\item A complete solution of the generalized Fekete--Szeg\H{o} problem for
\[
|a_3-\lambda a_2^2|-\mu|a_2|,
\]
following the recent investigations of Lecko and Partyka \cite{Lecko2024} and Bulboac\u{a} et al.\ \cite{Bulboaca2025}.
\end{enumerate}

All estimates are sharp, and the corresponding extremal functions are constructed explicitly. Our approach relies on the Carath\'eodory parametrization of functions with positive real part, the Libera--Z{\l}otkiewicz parametrization \cite{LZ}, and on optimization lemmas due to Cho et al.\ \cite{C12}, Choi et al.\ \cite{CKS1}, Ma and Minda \cite{MM}, and Sim and Thomas \cite{SimThomas2020}. The extremal functions are obtained from the integral representation
\[
f(z)
=
z\exp\left(
\int_0^z \frac{1}{t}
\left[
\exp\left(
\alpha\frac{p(t)-1}{p(t)+1}
\right)
-1
\right]
dt
\right),
\]
where $p\in\mathcal{P}$ is a suitable Carath\'eodory function.

The paper is organized as follows. In Section~2, we recall the auxiliary lemmas needed in the proofs. Section~3 contains sharp bounds for the inverse logarithmic coefficients $\Gamma_1$, $\Gamma_2$, and $\Gamma_3$. In Section~4, we prove sharp bounds for the difference $|\Gamma_2|-|\Gamma_1|$. Section~5 is devoted to the second-order inverse logarithmic Hankel determinant. In Section~6, we obtain sharp upper and lower bounds for the third-order Hermitian--Toeplitz determinant. Section~7 contains the complete solution of the generalized Fekete--Szeg\H{o} problem. Finally, Section~8 summarizes the main results and outlines possible future directions.
\section{Auxiliary lemmas}

Let $\mathcal{P}$ denote the class of all analytic functions $p$ in the unit disk $\mathbb{D}$ satisfying $p(0) = 1$ and $\Re p(z) > 0$ for all $z \in \mathbb{D}$. Every $p \in \mathcal{P}$ admits the series representation
\begin{equation}\label{p1}
p(z) = 1 + \sum_{n=1}^{\infty} p_n z^n, \quad z \in \mathbb{D}.
\end{equation}
Functions in $\mathcal{P}$ are referred to as \emph{Carath\'{e}odory functions}. It is well-known that for $p \in \mathcal{P}$, the coefficients satisfy the sharp bound $|p_n| \le 2$ for all $n \ge 1$ (see \cite{PLD1}).

Now we recall the following well-known lemma due to Cho et al. \cite{C12}.
\begin{lem}\label{L1} \cite[Lemma 2.4]{C12} If $p\in\mathcal{P}$ is of the form (\ref{p1}), then
\begin{equation}\label{c1}p_1 =2\tau_1,\end{equation}
\begin{equation}\label{c2} p_2=2\tau_1^2 + 2(1 - |\tau_1|^2)\tau_2\end{equation}
and
\begin{equation}\label{c3} p_3 = 2\tau_1^3+4(1-|\tau_1|^2)\tau_1\tau_2 - 2(1 - |\tau_1|^2)\ol{\tau_1}\tau_2^2 + 2(1 - \tau_1^2)(1 - |\tau_2|^2)\tau_3
\end{equation}
for some $\tau_1, \tau_2, \tau_3 \in\mathbb{\ol D}:= \{z \in \mathbb{C}: |z| \le 1 \}$.
For $ \tau_1 \in \mathbb{T} := \{ z \in \mathbb{C} : |z| = 1 \} $, there is a unique function $ p \in \mathcal{P} $ with $ p_1 $ as in (\ref{c1}), namely,
\[
p(z) = \frac{1 + \tau_1 z}{1 - \tau_1 z}, \quad z \in \mathbb{D}.
\]
For $ \tau_1 \in \mathbb{D} $ and $ \tau_2 \in \mathbb{T} $, there is a unique function $ p \in \mathcal{P} $ with $ p_1 $ and $ p_2 $ as in (\ref{c1}) and (\ref{c2}), namely,
\[
p(z) = \frac{1 + (\ol \tau_1 \tau_2 + \tau_1) z + \tau_2 z^2}{1 + (\ol \tau_1 \tau_2 - \tau_1) z - \tau_2 z^2}, \quad z \in \mathbb{D}.
\]
For $ \tau_1, \tau_2 \in \mathbb{D} $ and $ \tau_3 \in \mathbb{T} $, there is a unique function $ p \in \mathcal{P} $ with $ p_1 $, $ p_2 $, and $ p_3 $ as in (\ref{c1}-\ref{c3}), namely,
\[
p(z) = \frac{1 + (\ol\tau_2 \tau_3 + \ol\tau_1 \tau_2 + \tau_1)z+(\ol\tau_1\tau_3+\tau_1\ol\tau_2\tau_3+\tau_2)z^2+\tau_3z^3}{1+(\ol\tau_2\tau_3+\ol\tau_1\tau_2-\tau_1)z+(\ol\tau_1\tau_3-\tau_1\ol\tau_2\tau_3-\tau_2)z^2-\tau_3z^3},\;\;z\in\mathbb{D}
\]
\end{lem}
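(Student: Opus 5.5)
The plan is to pass from Carathéodory functions to Schur functions and then use the Schur algorithm to parametrize the first three coefficients. Given $p\in\mathcal{P}$, set
\[
\omega(z)=\frac{p(z)-1}{p(z)+1},
\]
so that $\omega$ is a Schur function: analytic in $\mathbb{D}$, $|\omega|\le 1$, $\omega(0)=0$. Write $\omega(z)=z\,\varphi_0(z)$ with $\varphi_0$ analytic and $|\varphi_0|\le1$, and put $\tau_1=\varphi_0(0)=p_1/2$; this gives (\ref{c1}).

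Next run the Schur algorithm. If $|\tau_1|<1$, set
\[
\varphi_1(z)=\frac{1}{z}\,\frac{\varphi_0(z)-\tau_1}{1-\ol{\tau_1}\varphi_0(z)},
\]
which is again a Schur function, and let $\tau_2=\varphi_1(0)$. Similarly, if $|\tau_2|<1$, form $\varphi_2$ from $\varphi_1$ in the same way and let $\tau_3=\varphi_2(0)$. Inverting these relations gives $\varphi_0=(\tau_1+z\varphi_1)/(1+\ol{\tau_1}z\varphi_1)$ and $\varphi_1=(\tau_2+z\varphi_2)/(1+\ol{\tau_2}z\varphi_2)$. Expanding these to second order yields the coefficients $\omega_1,\omega_2,\omega_3$ of $\omega$ as polynomials in $\tau_1,\tau_2,\tau_3$ and their conjugates. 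Then $p=(1+\omega)/(1-\omega)=1+2\omega+2\omega^2+2\omega^3+\cdots$ gives
\[
p_2=2\omega_2+2\omega_1^2,\qquad p_3=2\omega_3+4\omega_1\omega_2+2\omega_1^3.
\]
Substituting should reproduce (\ref{c2}) and (\ref{c3}); the factor $(1-\tau_1^2)$ in (\ref{c3}) arises from combining $\tau_1$ terms with conjugates in the expansion. In the degenerate cases $|\tau_1|=1$ or $|\tau_2|=1$ the algorithm stops, the corresponding factor $1-|\tau_j|^2$ vanishes, and the formulas hold with any choice of the remaining parameters.

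For the uniqueness statements I would use the fact that the Schur algorithm terminates exactly when some $|\tau_k|=1$, which happens precisely when $\varphi_{k-1}$ is a unimodular constant by the maximum principle. Consequently:
\begin{itemize}
\item[(i)] $|\tau_1|=1$ forces $\omega=\tau_1 z$, so $p=(1+\tau_1z)/(1-\tau_1z)$.
\item[(ii)] $\tau_1\in\mathbb{D}$ and $|\tau_2|=1$ force $\varphi_1\equiv\tau_2$, so $\omega=z(\tau_1+\tau_2 z)/(1+\ol{\tau_1}\tau_2 z)$. Then $p=(1+\omega)/(1-\omega)$ is exactly the displayed rational function.
\item[(iii)] The third case is analogous, with $\varphi_2\equiv\tau_3$.
\end{itemize}
Conversely, the prescribed $p_1,\dots,p_k$ determine $\tau_1,\dots,\tau_k$ uniquely; this follows from the triangular structure of the formulas, since $1-|\tau_j|^2\neq0$ for $j<k$. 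This gives uniqueness.

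I expect the main obstacle to be the bookkeeping in the expansion of $p_3$, especially reducing the result to the stated form $2(1-\tau_1^2)(1-|\tau_2|^2)\tau_3$. I would first check this form against the known Libera--Złotkiewicz formula. The $p_3$ coefficient must be computed carefully, because the paper's version of this factor may differ slightly from the classical one (the classical formula has $1-|\tau_1|^2$).
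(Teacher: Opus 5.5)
The paper gives no proof of this lemma; it quotes it from Cho et al., and your Schur-parameter argument is the standard route behind it. Several parts of your plan are sound: the reductions $p_2=2\omega_2+2\omega_1^2$ and $p_3=2\omega_3+4\omega_1\omega_2+2\omega_1^3$, and the maximum-principle treatment of the three uniqueness clauses (including reading off the Schur parameters of any competing $p$ from the triangular formulas). What fails is your claim that the expansion reproduces the factor $(1-\tau_1^2)$ of (\ref{c3}) ``by combining $\tau_1$ terms with conjugates.'' With $u=z\varphi_1=\tau_2z+(1-|\tau_2|^2)\tau_3z^2+O(z^3)$ one has
\[
\varphi_0=\frac{\tau_1+u}{1+\overline{\tau_1}u}=\tau_1+(1-|\tau_1|^2)u-\overline{\tau_1}(1-|\tau_1|^2)u^2+O(u^3),
\]
hence
\[
\omega_3=(1-|\tau_1|^2)(1-|\tau_2|^2)\tau_3-(1-|\tau_1|^2)\overline{\tau_1}\tau_2^2 .
\]
So the last term of $p_3$ is $2(1-|\tau_1|^2)(1-|\tau_2|^2)\tau_3$. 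The factor is just the derivative $1-|\tau_1|^2$ of the M\"obius map at $u=0$, and no cancellation turns it into $1-\tau_1^2$. The printed (\ref{c3}) is a misprint of the classical formula, as your closing caveat suspects.

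You therefore cannot claim to have proved the statement as printed. Its existence part survives. For $|\tau_1|<1$, replace $\tau_3$ by $\frac{1-|\tau_1|^2}{1-\tau_1^2}\tau_3$, which stays in $\overline{\mathbb{D}}$ because $|1-\tau_1^2|\ge 1-|\tau_1|^2$; for $|\tau_1|=1$, take $\tau_3=0$. However, the third uniqueness clause is false as printed whenever $\tau_1\notin\mathbb{R}$. The displayed rational function, which you correctly obtain from $\varphi_2\equiv\tau_3$, has the classical $p_3$. For $|\tau_3|=1$, the printed value of $p_3$ would force the third Schur parameter of $p$ to have modulus $|1-\tau_1^2|/(1-|\tau_1|^2)>1$, so no $p\in\mathcal{P}$ attains it. For the same reason, your remark that for $|\tau_1|=1$ ``the formulas hold with any choice of the remaining parameters'' fails for the printed factor unless $\tau_1=\pm1$. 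What you should state and prove is the classical version with $1-|\tau_1|^2$, or equivalently the printed one for real $\tau_1$. Since the paper only uses $\tau_1\in[0,1]$, its later arguments are unaffected.
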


Following a well-known result due to Choi et al. \cite{CKS1}.
\begin{lem}\label{L2}\cite{CKS1} Let $A$, $B$, $C$ be real numbers and let
\[Y(A, B, C):= \max\limits_{z\in \ol{\mathbb{D}}}\left\lbrace |A+Bz+Cz^2|+1-|z|^2\right\rbrace.\]
\begin{enumerate} 
\item[(i)] If $AC\ge 0$, then
\[Y(A, B, C) =
\begin{cases}
|A|+|B|+|C|, & \text{if}\;\;\; |B|\ge 2(1-|C|), \\
1+|A|+\frac{B^2}{4(1-|C|)}, &\text{if}\;\;\; |B|<2(1-|C|).
\end{cases}
\]
\item[(ii)] If $AC<0$, then 
\[Y(A,B,C)=
\begin{cases}
1-|A|+\frac{B^2}{4(1-|C|)}, &\text{if}\;\;\; -4AC(C^{-2}-1) \le B^2\; \text{and}\; |B|<2(1-|C|), \\
1+|A|+\frac{B^2}{4(1+|C|)}, &\text{if}\;\;\; B^2<\min\left\{4(1+|C|)^2, -4AC(C^{-2}-1) \right\}, \\
R(A,B,C), &\text{otherwise},
\end{cases}
\]
where
\[R(A,B,C):=
\begin{cases}
|A|+|B|-|C|, & \text{if}\;\;\; |C|(|B|+4|A|) \le |AB|, \\
-|A|+|B|+|C|, & \text{if}\;\;\; |AB|\le |C|(|B|-4|A|), \\
(|C|+|A| )\sqrt{1-\frac{B^2}{4AC}}, &\text{otherwise}.
\end{cases}
\]
\end{enumerate} 
\end{lem}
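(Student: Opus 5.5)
We maximize first in the argument of $z$ and then in its modulus. The symmetries of the functional let us normalize the signs: replacing $z$ by $-z$ changes $B$ to $-B$, and multiplying $A+Bz+Cz^2$ by $-1$ does not change its modulus. We may therefore assume $A\ge 0$ and $B\ge 0$, and the sign of $C$ then records whether $AC\ge0$ or $AC<0$. Write $z=re^{i\theta}$ with $0\le r\le 1$ and put $x=\cos\theta\in[-1,1]$. Since $A,B,C$ are real,
\[
|A+Bz+Cz^2|^2 = A^2+B^2r^2+C^2r^4+2Br(A+Cr^2)\,x+2ACr^2(2x^2-1).
\]
For fixed $r$ this is a quadratic $g_r(x)$ in $x$ with leading coefficient $4ACr^2$. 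The problem thus reduces to computing $M(r):=\max_{x\in[-1,1]}\sqrt{g_r(x)}$ and then maximizing $M(r)+1-r^2$ over $r\in[0,1]$.

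\textbf{Case (i), $AC\ge 0$.} Here $g_r$ is convex in $x$ and its linear coefficient is nonnegative, so the maximum is at $x=1$. This gives $M(r)=|A|+|B|r+|C|r^2$, so we must maximize $h(r)=1+|A|+|B|r-(1-|C|)r^2$ on $[0,1]$. If $|C|\ge1$, $h$ is nondecreasing and the maximum is $h(1)$. Otherwise $h$ is a concave parabola with vertex at $r_0=|B|/(2(1-|C|))$. If $r_0\ge1$, i.e. $|B|\ge 2(1-|C|)$, the maximum is $h(1)=|A|+|B|+|C|$. If $r_0<1$, the maximum is $h(r_0)=1+|A|+B^2/(4(1-|C|))$. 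These are exactly the two branches in (i).

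\textbf{Case (ii), $AC<0$.} Now $g_r$ is concave in $x$, with critical point
\[
x_0(r)=-\frac{B(A+Cr^2)}{4ACr}.
\]
If $|x_0(r)|\le 1$, completing the square gives
\[
M(r)^2=(A-Cr^2)^2\Bigl(1-\frac{B^2}{4AC}\Bigr),
\]
and we should confirm this algebra. If $|x_0(r)|>1$, the maximum is at an endpoint $x=\pm1$, so $M(r)=\bigl||A+Cr^2|+Br\bigr|$, with the sign of $A+Cr^2$ deciding which endpoint.

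The interval $[0,1]$ therefore splits into at most two subintervals, one where the interior formula applies and one where the endpoint formula applies, whose boundary is the root of $|x_0(r)|=1$. On each subinterval we maximize the explicit one-variable function of $r$ by elementary calculus. The interior branch gives $M(r)+1-r^2=\sqrt{1-B^2/(4AC)}\,(|A|+|C|r^2)+1-r^2$, which is monotone in $r^2$, so its maximum is at $r=1$ or at the boundary of its subinterval. The endpoint branch gives a quadratic in $r$ with vertex at $|B|/(2(1\mp|C|))$, the sign depending on the sign of $A+Cr^2$.

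Comparing the candidate values should produce the five formulas. The vertex of the endpoint quadratic with $x=-1$ gives $1-|A|+B^2/(4(1-|C|))$, and the vertex with $x=1$ gives $1+|A|+B^2/(4(1+|C|))$. The value $r=1$ on the interior branch gives $(|C|+|A|)\sqrt{1-B^2/(4AC)}$. The value $r=1$ on the endpoint branches gives $|A|+|B|-|C|$ and $-|A|+|B|+|C|$.

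The conditions stated in (ii) should then follow from two requirements: that the relevant vertex lies in $[0,1]$, and that it lies in the region where that endpoint formula is the correct one. For the latter, one should check that the inequality $-4AC(C^{-2}-1)\le B^2$ is the condition $|x_0|\ge1$ evaluated at the appropriate $r$. The trichotomy defining $R(A,B,C)$ should correspond to whether $|x_0(1)|\le 1$ and, if not, which endpoint is active at $r=1$. That is, it should come from comparing $|C|(|B|\pm 4|A|)$ with $|AB|$.

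I expect the main obstacle to be this final bookkeeping in case (ii): showing that the candidate values, together with their domains of validity, assemble exactly into the stated case distinctions and that no other critical configuration occurs. I would handle it by writing $t=r^2$, expressing each piecewise objective as a function of $t$, and checking continuity and monotonicity across the switching points. Each claimed maximum would then be verified to dominate the other candidates on its parameter region.
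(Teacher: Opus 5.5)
The paper does not prove this lemma; it quotes it from Choi, Kim and Sugawa. Your argument therefore has to stand on its own. As written, it proves part (i) but not part (ii), which is where all the content of the lemma lies.

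Part (i) is complete, apart from the trivial extra normalization $C\ge 0$ when $A=0$. In part (ii) your ingredients are all correct:
\begin{itemize}
\item the formula for $g_r$;
\item the interior value $(|A|+|C|r^2)\sqrt{1-B^2/(4AC)}$ and the endpoint value $|A+Cr^2|+|B|r$;
\item the five candidate values;
\item reading $R(A,B,C)$ as $M(1)$, split according to $|x_0(1)|\le 1$ and the sign of $A+C$.
\end{itemize}
But you stop at ``should''. Nothing in the proposal shows that the stated inequalities are exactly the regions where each candidate is the maximum.

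Your structural claim is also false. Normalize $A>0>C$ and $B>0$. Then $|x_0(r)|\le 1$ holds exactly on $[r_1,r_2]$, where $r_1\le\sqrt{A/|C|}\le r_2$ are the positive roots of $|C|Br^2\pm 4A|C|r-AB=0$. So $[0,1]$ can split into \emph{three} pieces:
\begin{itemize}
\item $x=1$ is optimal on $[0,r_1)$;
\item the interior formula holds on $[r_1,r_2]$;
\item $x=-1$ is optimal on $(r_2,1]$.
\end{itemize}
For example, $A=0.1$, $B=0.9$, $C=-0.5$ gives $r_1\approx 0.277$ and $r_2\approx 0.722$. This is an instance of the first branch of (ii), and its maximizer $r=0.9$ lies in the third piece.

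What closes the bookkeeping is that a single threshold governs all three pieces. Put $\Delta:=-4AC(C^{-2}-1)=4A(1-|C|^2)/|C|$ and $v_\pm:=B/(2(1\pm|C|))$. For $B>0$, substitute $v_\pm$ into the two quadratics defining $r_1,r_2$, and compute the slope in $t=r^2$ of the interior branch. This gives
\[
v_+\le r_1\iff B^2\le\Delta,\qquad v_-\ge r_2\iff B^2\ge\Delta\ \ (|C|<1),\qquad |C|\sqrt{1-\tfrac{B^2}{4AC}}-1\le 0\iff B^2\le\Delta .
\]
Now consider $F(r)=M(r)+1-r^2$, which is continuous because the interior and endpoint formulas agree where $|x_0|=1$.
\begin{itemize}
\item If $B^2<\Delta$, then $F$ increases up to $v_+$ and then decreases across all three pieces. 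This gives $1+|A|+B^2/(4(1+|C|))$ when $|B|<2(1+|C|)$. Otherwise $r_1>v_+\ge 1$, and the maximum is $F(1)=|A|+|B|-|C|$.
\item If $B^2\ge\Delta$, then $F$ is nondecreasing on $[0,r_2]$. On $[r_2,1]$ it increases up to $v_-$, or throughout if $|C|\ge 1$. This gives $1-|A|+B^2/(4(1-|C|))$ when $|B|<2(1-|C|)$, and otherwise $F(1)=R(A,B,C)$.
\end{itemize}
The complements of these two conditions are exactly the ``otherwise'' case of the statement. No pairwise comparison of candidate values is needed.

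The case $B=0$ is handled by the slope criterion alone, since the interior formula then holds on all of $[0,1]$. Without this observation, your plan of verifying that each claimed maximum dominates the others on its region leaves the lemma unproved.
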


\begin{lem}\label{L3} \cite{MM}
Let $p \in \mathcal{P}$ be given by \eqref{p1}. Then
\[
\left| p_2 - v p_1^2 \right| \le 
\begin{cases}
-4v + 2, & v < 0, \\
2, & 0 \le v \le 1, \\
4v - 2, & v > 1.
\end{cases}
\]
Moreover, for $v < 0$ or $v > 1$, equality holds if and only if
\[
h(z) = \frac{1+z}{1-z} \quad \text{or one of its rotations}.
\]
For $0 < v < 1$, equality holds if and only if
\[
h(z) = \frac{1+z^2}{1-z^2} \quad \text{or one of its rotations}.
\]
\end{lem}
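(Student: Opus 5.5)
The plan is to reduce the functional to the Schur-type parameters of Lemma~\ref{L1} and then do a one-variable maximization. Writing $p_1=2\tau_1$ and $p_2=2\tau_1^2+2(1-|\tau_1|^2)\tau_2$ with $\tau_1,\tau_2\in\ol{\mathbb{D}}$, we get the identity
\[
p_2-vp_1^2=2(1-2v)\tau_1^2+2(1-|\tau_1|^2)\tau_2 .
\]
Setting $r=|\tau_1|\in[0,1]$, the triangle inequality together with $|\tau_2|\le 1$ gives
\[
|p_2-vp_1^2|\le 2|1-2v|\,r^2+2(1-r^2)=:g(r^2).
\]
Here $g$ is affine in $r^2$ on $[0,1]$, so its maximum is attained at an endpoint and equals $\max\{2|1-2v|,\,2\}$.

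Next I compare the two endpoint values. We have $|1-2v|\ge 1$ exactly when $v\le 0$ or $v\ge 1$. Hence:
\begin{itemize}
\item for $v<0$ the bound is $2(1-2v)=-4v+2$;
\item for $v>1$ it is $2(2v-1)=4v-2$;
\item for $0\le v\le 1$ it is $2$.
\end{itemize}
The boundary values $v=0,1$ agree with both neighbouring formulas. This proves the inequality.

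For the equality statements I track when every estimate above is tight.
\begin{itemize}
\item If $v<0$ or $v>1$, then $|1-2v|>1$, so $g(r^2)$ is strictly increasing in $r^2$. Equality therefore forces $r=1$, that is, $\tau_1\in\mathbb{T}$. By the uniqueness part of Lemma~\ref{L1}, $p(z)=(1+\tau_1z)/(1-\tau_1z)$, which is a rotation of $(1+z)/(1-z)$. Conversely, this function gives $p_1=2\tau_1$ and $p_2=2\tau_1^2$, hence $|p_2-vp_1^2|=2|1-2v|$, so equality does hold.
\item If $0<v<1$, then $|1-2v|<1$, so $g$ is strictly decreasing in $r^2$. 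Equality forces $r=0$, and the triangle-inequality step then requires $|\tau_2|=1$. With $\tau_1=0$ and $\tau_2\in\mathbb{T}$, the uniqueness clause of Lemma~\ref{L1} gives $p(z)=(1+\tau_2z^2)/(1-\tau_2z^2)$, a rotation of $(1+z^2)/(1-z^2)$. Conversely, this function has $p_1=0$ and $|p_2|=2$, so equality holds.
\end{itemize}

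There is no serious obstacle. The only care needed is in the equality analysis: one must use the strict monotonicity of $g$ when $v\notin\{0,1\}$, and must check that the triangle inequality is tight only in the stated configurations. The endpoint values $v=0,1$ lie outside both "if and only if" statements, consistent with both extremal families attaining the bound there.
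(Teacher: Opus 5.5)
Your proof is correct, including the equality analysis via the uniqueness clauses of Lemma~\ref{L1}. The paper does not prove this lemma itself; it only cites Ma and Minda~\cite{MM}. Your argument is essentially the classical one: for the Schwarz function $w=(p-1)/(p+1)$ one has $\tau_1=w_1$ and $(1-|\tau_1|^2)\tau_2=w_2$, so your identity $p_2-vp_1^2=2(1-2v)\tau_1^2+2(1-|\tau_1|^2)\tau_2$ is the same as writing $p_2-vp_1^2=2\bigl(w_2+(1-2v)w_1^2\bigr)$, using $|w_2|\le 1-|w_1|^2$, and then maximizing an affine function of $|w_1|^2$.
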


\begin{lem}\label{L6}\cite{SimThomas2020}
Let $J, K,$ and $L$ be numbers such that $J \ge 0$, $K \in \mathbb{C}$, and $L \in \mathbb{R}$. 
Let $p \in \mathcal{P}$ be of the form (\ref{p1}) and define a function by
\[
\Phi(p_1,p_2) = \big| K p_1^2 + L p_2 \big| - \big| J p_1 \big|.
\]
Then 
\[
\Phi(p_{1}, p_{2}) \le 
\begin{cases}
|4K + 2L| - 2J, & \text{if } |2K + L| \ge |L| + J, \\[6pt]
2|L|, & \text{otherwise.}
\end{cases}
\]
and
\[
- \Phi(p_1,p_2) \le 
\begin{cases}
2J - M, & \text{when } J \ge M + 2|L|, \\[6pt]
2J \sqrt{\dfrac{ 2|L|}{M + 2|L|}}, & \text{when } J^2 \le 2|L|(M + 2|L|), \\[10pt]
2|L| +\dfrac{ J^2}{M + 2|L|}, & \text{otherwise}
\end{cases}
\]
where $M=|4K+2L|$.
\end{lem}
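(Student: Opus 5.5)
The plan is to reduce the functional to a one-variable problem through the parametrization of Lemma~\ref{L1}. Write $p_1=2\tau_1$ and $p_2=2\tau_1^2+2(1-|\tau_1|^2)\tau_2$ with $\tau_1,\tau_2\in\ol{\mathbb{D}}$, and set $t=|\tau_1|\in[0,1]$ and $M=|4K+2L|$. Then
\[
Kp_1^2+Lp_2=(4K+2L)\tau_1^2+2L(1-t^2)\tau_2,\qquad |Jp_1|=2Jt .
\]
Since $\tau_2$ ranges over the whole closed disk independently of $\tau_1$, both directions of the estimate come down to controlling $|(4K+2L)\tau_1^2+2L(1-t^2)\tau_2|$ for fixed $t$. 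After that, a real quadratic in $t$ has to be optimized on $[0,1]$. Because $L$ is real, $|2L|=2|L|$, and no argument issues arise for the $\tau_2$ term.

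\textbf{Upper bound.} The triangle inequality gives $\Phi\le g(t):=(M-2|L|)t^2-2Jt+2|L|$.
\begin{itemize}
\item If $M\ge 2|L|$, then $g$ is convex, so its maximum on $[0,1]$ is attained at an endpoint and equals $\max\{2|L|,\,M-2J\}$.
\item If $M<2|L|$, then $g$ is concave with vertex at $t=J/(M-2|L|)\le0$. Hence $g$ is decreasing on $[0,1]$ and its maximum is $g(0)=2|L|$. In this case $M-2J<2|L|$ anyway.
\end{itemize}
So in all cases $\Phi\le\max\{2|L|,M-2J\}$. The inequality $M-2J\ge2|L|$ is exactly $|2K+L|\ge|L|+J$, which produces the stated dichotomy. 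For equality, take $\tau_1=0$ and $|\tau_2|=1$ (that is, $p_1=0$, $|p_2|=2$) in the second case. In the first case take $\tau_1\in\mathbb{T}$, so $p(z)=(1+z)/(1-z)$ up to rotation.

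\textbf{Lower bound for $\Phi$, i.e.\ upper bound for $-\Phi$.} Here the modulus must be minimized over $\tau_2$, giving
\[
\min_{|\tau_2|\le1}|(4K+2L)\tau_1^2+2L(1-t^2)\tau_2|=\max\{0,\,Mt^2-2|L|(1-t^2)\}.
\]
Hence $-\Phi\le h(t):=2Jt-\max\{0,(M+2|L|)t^2-2|L|\}$. Put $t_0=\sqrt{2|L|/(M+2|L|)}$ and $t^*=J/(M+2|L|)$.
\begin{itemize}
\item On $[0,t_0]$ we have $h(t)=2Jt$, which is maximized at $t_0$ with value $2Jt_0$.
\item On $[t_0,1]$ the function $h$ is the concave quadratic $2Jt-(M+2|L|)t^2+2|L|$, with vertex at $t^*$.
\end{itemize}
Three cases follow:
\begin{itemize}
\item If $t^*\le t_0$, equivalently $J^2\le2|L|(M+2|L|)$, the overall maximum is $2J\sqrt{2|L|/(M+2|L|)}$.
\item If $t^*\ge1$, equivalently $J\ge M+2|L|$, the maximum is $h(1)=2J-M$.
\item Otherwise $t_0<t^*<1$, and the value at the vertex is $2|L|+J^2/(M+2|L|)$.
\end{itemize}
This reproduces the three branches. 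Sharpness in each case comes from choosing $\tau_1$ real with $|\tau_1|$ equal to the optimal $t$, and $\tau_2$ either cancelling the $\tau_1^2$ term or anti-aligned with it. The functions of Lemma~\ref{L1} then give the extremal $p$.

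The main care point is the reduction to $t$ in the lower bound: one must check that the minimizing $\tau_2$ is admissible ($|\tau_2|\le1$) precisely when $Mt^2\le2|L|(1-t^2)$, and also handle the degenerate cases $L=0$ and $M+2|L|=0$ separately. The rest is routine quadratic optimization.
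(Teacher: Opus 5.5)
The paper gives no proof of this lemma (it is quoted from Sim and Thomas). Your argument is correct and is essentially the standard proof from that source: parametrize $p_2=2\tau_1^2+2(1-|\tau_1|^2)\tau_2$ (equivalently, the Libera--Z{\l}otkiewicz formula used in Section~5), apply the triangle inequality, or the reverse triangle inequality, in the free parameter $\tau_2$, and then optimize the resulting quadratic in $t=|\tau_1|\in[0,1]$. All the case conditions translate correctly, e.g.\ $M-2J\ge 2|L|\iff |2K+L|\ge |L|+J$ and $t^*\le t_0\iff J^2\le 2|L|(M+2|L|)$. The degenerate case $M+2|L|=0$ forces $K=L=0$, and there $-\Phi=J|p_1|\le 2J$ agrees with the first branch.
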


We need the following elementary sign lemma to prove Theorem \eqref{T3}:

\begin{lem}\label{lem:Hsign}
For $0<\alpha\le 1$, let $\widetilde H$ be defined by \eqref{Htilde}, and set
\[
d=\alpha^2,\qquad d_0:=\frac{2688}{3395}.
\]
Then $\widetilde H$ has no zero in $(0,1)$ at which it changes sign from positive to negative. More precisely:
\begin{itemize}
\item[(i)] if $0<d\le d_0$, then $\widetilde H(y)<0$ for all $y\in[0,1)$;
\item[(ii)] if $d_0<d\le1$, then $\widetilde H$ is strictly increasing on $[0,1]$, with
\[
\widetilde H(0)<0<\widetilde H(1),
\]
and $\widetilde H$ has a unique zero in $(0,1)$, across which its sign changes from negative to positive.
\end{itemize}
\end{lem}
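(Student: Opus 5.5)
The plan is to treat $\widetilde H$ from \eqref{Htilde} as an explicit polynomial in $y\in[0,1]$ whose coefficients are polynomials in $d=\alpha^2$. Both parts then reduce to sign information about finitely many one-variable polynomials in $d$ on $(0,1]$. First I will expand $\widetilde H(y)=\sum_k c_k(d)\,y^k$ and record the two endpoint values $\widetilde H(0)=c_0(d)$ and $\widetilde H(1)=\sum_k c_k(d)$. I expect $d_0=2688/3395$ to be exactly the root of $\widetilde H(1)$ in $(0,1]$; the rational form of $d_0$ suggests that $\widetilde H(1)$ is affine in $d$ after removing a positive factor. So the first step is to verify the following:
\begin{itemize}
\item $c_0(d)<0$ for all $d\in(0,1]$;
\item $\widetilde H(1)\le 0$ for $d\le d_0$, with equality only at $d=d_0$;
\item $\widetilde H(1)>0$ for $d>d_0$.
\end{itemize}
Each of these is a direct sign check of a low-degree polynomial in $d$.

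The second and central step is monotonicity. I will show $\widetilde H'(y)>0$ on $[0,1]$ for every $d\in(0,1]$, or at least for $d\in(d_0,1]$. My first attempt is to write $\widetilde H'(y)=\sum_k k\,c_k(d)\,y^{k-1}$ and bound it below on $[0,1]$ by grouping terms. Positive coefficients are kept. Any negative coefficient $c_k(d)y^{k-1}$ is dominated using $y^{k-1}\le y^{j-1}$ for $j<k$, which absorbs it into a lower-order positive term. This gives a lower bound $\widetilde H'(y)\ge q(d)$ with $q$ an explicit polynomial, whose positivity on $(0,1]$ is checked by hand, for instance via Sturm sequences or an interval argument. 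If the coefficients do not align well enough, I will instead use the quantity
\[
\min\{\widetilde H'(0),\ \widetilde H'(1)\}
\]
together with a sign condition on $\widetilde H''$ (or its convexity or concavity) on $[0,1]$ to conclude positivity of $\widetilde H'$. I expect this step to be the main obstacle, since the $d$-dependence of the coefficients may make $\widetilde H'$ nearly vanish near $d=1$ or near $y=0$.

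With these ingredients the conclusions follow.

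\emph{Part (ii).} For $d_0<d\le1$, strict monotonicity together with $\widetilde H(0)<0<\widetilde H(1)$ gives a unique zero $y_0\in(0,1)$ by the intermediate value theorem. The sign change at $y_0$ is from negative to positive because $\widetilde H$ is increasing.

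\emph{Part (i).} For $0<d\le d_0$, if monotonicity is available on the whole range of $d$, then $\widetilde H(y)<\widetilde H(1)\le0$ for every $y\in[0,1)$. If monotonicity holds only for $d>d_0$, I will use a fallback. I will show $\widetilde H$ is convex in $y$ for small $d$; then $\widetilde H$ lies below the chord joining $(0,\widetilde H(0))$ and $(1,\widetilde H(1))$, which is negative on $[0,1)$ since $\widetilde H(0)<0$ and $\widetilde H(1)\le 0$. Failing convexity, I will bound the positive coefficients $c_k(d)$ by their values at $d=d_0$, using their monotonicity in $d$, so that each $\widetilde H(y)$ is dominated by its value at $d_0$.

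The headline statement, that no zero in $(0,1)$ changes sign from positive to negative, is then immediate from (i) and (ii).
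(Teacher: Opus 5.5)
Your plan works, and where $\widetilde H$ is increasing it is the paper's argument. Write $\widetilde H(y)=Ay^3+By^2+Cy-7344$ with $A=384-1120d$, $B=528-420d$, $C=15120d-4320$. Then $\widetilde H(0)=-7344$ and $\widetilde H(1)=4(3395d-2688)$, which confirms your guess about $d_0$. For $d>12/35$ the paper uses exactly your alternative: $\widetilde H'$ is a concave quadratic (since $A<0$) with $\widetilde H'(0)=C>0$ and $\widetilde H'(1)=10920d-2112>0$. Your absorption trick, $3Ay^2\ge 3Ay$ on $[0,1]$, gives the same lower bound $\min\{C,\,10920d-2112\}$.

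Two corrections are needed.

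\emph{Monotonicity for all $d$ is false.} Since $\widetilde H'(0)=15120d-4320<0$ for $d<2/7$, part (i) genuinely needs a fallback.
\begin{itemize}
\item Your convexity fallback works: $\widetilde H''(y)=6Ay+2B\ge 0$ on $[0,1]$ exactly when $d\le 4/9$, and then $\widetilde H$ lies below a negative chord.
\item It closes the argument only because the concavity argument gives monotonicity on all of $(12/35,1]$, not merely on $(d_0,1]$. Had you proved only the latter, as your ``at least'' hedge allows, the interval $(4/9,d_0]$ would be left uncovered.
\item This is where you differ from the paper, which splits at $d=2/7$. For $d\le 2/7$ it uses $A>0\ge C$ to see that $\widetilde H'$ has roots of opposite signs. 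For $2/7<d\le 12/35$ all coefficients of $\widetilde H'$ are nonnegative. Your convexity observation handles all of $d\le 12/35$ in one step.
\end{itemize}

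\emph{Your last fallback fails as stated.} The coefficients $A$ and $B$ decrease in $d$, so for $d<d_0$ they exceed their values at $d_0$, and coefficientwise domination goes the wrong way. The correct version is the aggregate one:
\[
\partial_d\widetilde H=y(15120-420y-1120y^2)\ge 13580y\ge 0,\qquad y\in[0,1].
\]
Hence $\widetilde H(y;d)\le \widetilde H(y;d_0)<\widetilde H(1;d_0)=0$ for $y\in[0,1)$ and $d\le d_0$. This route is shorter than both the paper's case split and your convexity argument. It derives (i) entirely from strict monotonicity in $y$ at the single value $d=d_0$, which the concavity argument used for (ii) already provides.
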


\begin{proof}[Proof of Lemma]
Write
\[
\widetilde H(y)=A y^3+B y^2+C y-7344,
\]
where
\[
A=384-1120d,\qquad B=528-420d,\qquad C=15120d-4320.
\]
Also
\[
\widetilde H(0)=-7344<0,
\qquad
\widetilde H(1)=13580d-10752
=4(3395d-2688).
\]

We split the proof into two cases.

\paragraph{Case 1: $0<d\le \frac{12}{35}$.}
Then $A\ge0$ and
\[
\widetilde H(1)\le 4\left(3395\cdot \frac{12}{35}-2688\right)
=4(1164-2688)<0.
\]

If $0<d\le \frac27$, then $C\le0$ and $B>0$. Hence $\widetilde H'$ has one negative and one positive root, so $\widetilde H$ first decreases and then increases. Since $\widetilde H(0)<0$ and $\widetilde H(1)<0$, it follows that $\widetilde H(y)<0$ for all $y\in[0,1]$.

If $\frac27<d\le \frac{12}{35}$, then $A>0$, $B>0$, and $C>0$. Thus $\widetilde H'(y)>0$ for all $y\ge0$. Hence $\widetilde H$ is strictly increasing on $[0,1]$, and since $\widetilde H(1)<0$, we again have $\widetilde H(y)<0$ for all $y\in[0,1]$.

\paragraph{Case 2: $\frac{12}{35}<d\le1$.}
Then
\[
A<0,\qquad B>0,\qquad C>0.
\]
Moreover,
\[
\widetilde H'(0)=C=15120d-4320>0,
\]
and
\[
\widetilde H'(1)=3A+2B+C=10920d-2112>0.
\]
Since $\widetilde H'$ is a concave down quadratic and is positive at both endpoints $0$ and $1$, it is positive on the whole interval $[0,1]$. Therefore $\widetilde H$ is strictly increasing on $[0,1]$.

If $\frac{12}{35}<d\le d_0$, then $\widetilde H(1)\le0$, so $\widetilde H(y)<0$ for all $y\in[0,1)$.

If $d_0<d\le1$, then $\widetilde H(0)<0<\widetilde H(1)$. Hence $\widetilde H$ has a unique zero in $(0,1)$, and its sign changes from negative to positive.

Thus in all cases $\widetilde H$ never changes sign from positive to negative on $(0,1)$.
\end{proof}

\section{Sharp Bounds for the Inverse Logarithmic Coefficients}

\begin{theo}\label{T1}
Let $0<\alpha\le 1$. If $f\in \mathcal{S}_{ex}^{\ast}$, then the inverse logarithmic coefficients $\Gamma_n$ $(n=1,2,3)$ satisfy the following sharp inequalities:
\[
|\Gamma_1|\le \frac{\alpha}{2},
\]
\[
|\Gamma_2|\le
\begin{cases}
\dfrac{\alpha}{4}, & 0<\alpha\le \dfrac23,\\[6pt]
\dfrac{3\alpha^2}{8}, & \dfrac23<\alpha\le1,
\end{cases}
\]
and
\begin{equation}\label{ND3}
|\Gamma_3|\le 
\begin{cases}
\dfrac{\alpha}{6}, & 0<\alpha\le \alpha_\star,\\[8pt]
\dfrac{\alpha(7\alpha+2)}{18}
\sqrt{\dfrac{2(7\alpha+2)}{29\alpha^2+42\alpha+12}},
& \alpha_\star<\alpha\le \alpha_b,\\[8pt]
\dfrac{29\alpha(49\alpha^2-16)^{3/2}}
{2232\sqrt{29\alpha^2-12}},
& \alpha_b<\alpha<\alpha_c,\\[8pt]
\dfrac{29\alpha^3}{72}, & \alpha_c\le \alpha\le 1,
\end{cases}
\end{equation}
where
\[
\begin{aligned}
\alpha_\star &\approx 0.542712
\quad \text{is the unique positive root of } 686\alpha^3+327\alpha^2-210\alpha-92=0,\\[2mm]
\alpha_b &\approx 0.679103
\quad \text{is the unique positive root of } 1421\alpha^3+812\alpha^2-712\alpha-336=0,\\[2mm]
\alpha_c &=\sqrt{\frac{928}{1943}}\approx 0.691095.
\end{aligned}
\]
All bounds are sharp for every $\alpha\in(0,1]$.
\end{theo}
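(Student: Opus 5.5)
The plan is to reduce every $\Gamma_n$ to Carath\'eodory coefficients and then to the parameters $\tau_1,\tau_2,\tau_3$ of Lemma~\ref{L1}. Write $zf'(z)/f(z)=e^{\alpha\omega(z)}$ with $\omega=(p-1)/(p+1)$ and $p\in\mathcal P$ as in \eqref{p1}. I would expand $\omega=\tfrac{p_1}{2}z+\bigl(\tfrac{p_2}{2}-\tfrac{p_1^2}{4}\bigr)z^2+\cdots$, then $e^{\alpha\omega}$ to third order. Comparing coefficients in $zf'=f\,e^{\alpha\omega}$ gives $a_2,a_3,a_4$ as polynomials in $\alpha,p_1,p_2,p_3$, and inserting these into \eqref{IG1} gives $\Gamma_1,\Gamma_2,\Gamma_3$.

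Since $a_2=\alpha p_1/2$, we get $\Gamma_1=-\alpha p_1/4$, and $|p_1|\le2$ yields $|\Gamma_1|\le\alpha/2$. Equality holds for $p(z)=(1+z)/(1-z)$.

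For $\Gamma_2$, I expect a form $\Gamma_2=-\frac{\alpha}{8}\bigl(p_2-v(\alpha)p_1^2\bigr)$ with $v$ affine in $\alpha$. Lemma~\ref{L3} then gives $|\Gamma_2|\le\alpha/4$ while $0\le v\le1$, and $|\Gamma_2|\le\frac{\alpha}{8}(4v-2)=3\alpha^2/8$ once $v>1$. The switch $v=1$ should occur exactly at $\alpha=2/3$, which is the first consistency check on the algebra. The extremal functions are built from $(1+z^2)/(1-z^2)$ and $(1+z)/(1-z)$ via the integral representation from the introduction.

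For $\Gamma_3$ I would substitute \eqref{c1}--\eqref{c3}. The class is invariant under rotations $e^{-i\theta}f(e^{i\theta}z)$, which multiply $\Gamma_3$ by a unimodular factor, so we may take $\tau_1=t\in[0,1]$. This yields a representation
\[
|\Gamma_3|=\frac{\alpha}{6}\Bigl|\,\beta(\alpha)t^3+(1-t^2)\bigl(A+B\tau_2+C\tau_2^2\bigr)+(1-t^2)(1-|\tau_2|^2)\tau_3\Bigr|,
\]
where $A=A(\alpha,t)$, $B=B(\alpha,t)$, $C=C(\alpha,t)$ are real.

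At the endpoints:
\begin{itemize}
\item $t=1$ gives $\frac{\alpha}{6}|\beta|$, which should equal $29\alpha^3/72$.
\item $t=0$ gives at most $\alpha/6$ (the $A$ and $B$ terms vanish there), attained with $\tau_3=1$, i.e. $p=(1+z^3)/(1-z^3)$.
\end{itemize}
For $t\in(0,1)$, the triangle inequality with $|\tau_3|\le1$ gives
\[
|\Gamma_3|\le\frac{\alpha}{6}\Bigl(|\beta|t^3+(1-t^2)\,Y\bigl(\widetilde A,\widetilde B,\widetilde C\bigr)\Bigr),
\]
where I first absorb the $t^3$ term into the constant coefficient, so that $\widetilde A=\beta t^3/(1-t^2)+A$. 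This avoids a second triangle inequality and keeps sharpness. Lemma~\ref{L2} then evaluates $Y$ explicitly according to the signs of $\widetilde A\widetilde C$ and the size of $\widetilde B$, producing a piecewise one-variable function $G_\alpha(t)$ on $[0,1]$.

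The main obstacle is the final step: tracking which branch of Lemma~\ref{L2} is active for each $(\alpha,t)$, then maximizing $G_\alpha$ in $t$. I expect the relevant regime to be $\widetilde A\widetilde C<0$, with the branch $1+|\widetilde A|+\widetilde B^2/(4(1+|\widetilde C|))$ giving the interior maximum. There the critical point should be explicit, since a factor $\sqrt{2(7\alpha+2)/(29\alpha^2+42\alpha+12)}$ appears. The branch $(|\widetilde C|+|\widetilde A|)\sqrt{1-\widetilde B^2/(4\widetilde A\widetilde C)}$ would then produce the third formula, with $(49\alpha^2-16)^{3/2}/\sqrt{29\alpha^2-12}$.

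The transition points come from equating adjacent candidate values:
\begin{itemize}
\item $\alpha_\star$ comes from $\alpha/6$ equal to the interior value, giving the cubic $686\alpha^3+327\alpha^2-210\alpha-92=0$.
\item $\alpha_b$ comes from the boundary between the two branches of Lemma~\ref{L2} at the optimal $t$.
\item $\alpha_c$ comes from the interior critical point reaching $t=1$.
\end{itemize}
Comparing the candidates and checking monotonicity/sign of $G_\alpha'$ in each $\alpha$-range, possibly with a sign lemma of the type of Lemma~\ref{lem:Hsign}, is where most of the work lies.

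For sharpness I would use the unique functions described in Lemma~\ref{L1}:
\begin{itemize}
\item $\tau_1=0,\ \tau_3=1$ for the first branch;
\item $\tau_1=1$ for the last branch;
\item for the middle branches, $\tau_1=t_0$ at the critical point, $\tau_2$ the maximizing point of $|\widetilde A+\widetilde B\tau_2+\widetilde C\tau_2^2|+1-|\tau_2|^2$ from Lemma~\ref{L2}, and $\tau_3$ the unimodular number aligning the phases so that the triangle inequality becomes equality.
\end{itemize}
Each $p$ is then fed into the integral representation of $f$.
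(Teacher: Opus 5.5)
Your strategy is the paper's: the same coefficient formulas, the normalization $\tau_1=t\in[0,1]$, absorbing the $t^3$ term into the constant coefficient, using $|\tau_3|\le1$, applying Lemma~\ref{L2}(ii), and maximizing in $t$. Your treatment of $\Gamma_1,\Gamma_2$ is exactly the paper's. For $\Gamma_3$, however, two concrete statements in your sketch are wrong, and the step you call the main obstacle is the entire content of the four-branch bound, which you leave unproved.

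\emph{First error: the $t^3$ term is counted twice.} Your displayed estimate $\frac{\alpha}{6}\bigl(|\beta|t^3+(1-t^2)Y(\widetilde A,\widetilde B,\widetilde C)\bigr)$ adds $|\beta|t^3$ even though $\widetilde A$ already contains $\beta t^3/(1-t^2)$. The correct bound is $|\Gamma_3|\le\frac{\alpha}{6}(1-t^2)\,Y(\widetilde A,\widetilde B,\widetilde C)$, where, up to an overall sign, $\widetilde A=\frac{29\alpha^2t^3}{12(1-t^2)}$, $\widetilde B=-\frac{7\alpha t}{2}$ and $\widetilde C=-t$. As you wrote it, the right-hand side tends to $2\cdot\frac{29\alpha^3}{72}$ as $t\to1^-$, so it cannot give the sharp last branch.

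\emph{Second error: the wrong branch of Lemma~\ref{L2}.} The branch you expect to carry the interior maximum, $1+|\widetilde A|+\widetilde B^2/(4(1+|\widetilde C|))$, is never active here. Its condition $\widetilde B^2<-4\widetilde A\widetilde C(\widetilde C^{-2}-1)$ reduces to $\frac{49}{4}<\frac{29}{3}$, which is false. Following that branch would not produce the factor $\sqrt{2(7\alpha+2)/(29\alpha^2+42\alpha+12)}$.

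\emph{What the computation actually gives} is a three-piece function of $t$.
\begin{itemize}
\item For $0<t<t_1=4/(7\alpha+4)$, the branch $1-|\widetilde A|+\widetilde B^2/(4(1-|\widetilde C|))$ applies. It yields $F_a(t)=\frac{\alpha}{6}+\bigl(\frac{49\alpha^3}{96}-\frac{\alpha}{6}\bigr)t^2+\frac{31\alpha^3}{288}t^3$, which has no interior maximum.
\item For $t_1\le t\le t_d=\sqrt{84/(203\alpha^2+232\alpha+84)}$, the branch $-|\widetilde A|+|\widetilde B|+|\widetilde C|$ applies; the branch $|\widetilde A|+|\widetilde B|-|\widetilde C|$ is empty. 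This gives the concave cubic $F_d(t)=-\frac{29\alpha^3}{72}t^3+\frac{\alpha(7\alpha+2)}{12}t(1-t^2)$. Its critical point $\tau_d^*=\sqrt{2(7\alpha+2)/(29\alpha^2+42\alpha+12)}$ is the true source of your square-root factor, and $\alpha_b$ is exactly where $\tau_d^*=t_d$.
\item For $t>t_d$, the branch $(|\widetilde A|+|\widetilde C|)\sqrt{1-\widetilde B^2/(4\widetilde A\widetilde C)}$ applies, as you guessed; call the resulting function $F_e$. After squaring and setting $x=t^2$, the derivative is a positive factor times the \emph{linear} function $S(x)=8526\alpha^2-3900-(2697\alpha^2-1116)x$. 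Here $S(t_d^2)$ factors through $1421\alpha^3+812\alpha^2-712\alpha-336$ and $S(1)=3(1943\alpha^2-928)$, which is how $\alpha_b$ and $\alpha_c$ arise. So no cubic sign lemma of the type of Lemma~\ref{lem:Hsign} is needed.
\end{itemize}

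\emph{What remains to be done.} The bound follows only after comparing the candidates $\alpha/6$, $F_a(t_1)=F_d(t_1)$, $F_d(\tau_d^*)$, $F_e$ at the zero $x_e^*$ of $S$, and $29\alpha^3/72$ on each $\alpha$-range. This includes the auxiliary point $\beta\approx0.5296$ where $\tau_d^*=t_1$. The comparisons rest on identities such as $(\alpha/6)^2-F_d(\tau_d^*)^2=-\alpha^2(686\alpha^3+327\alpha^2-210\alpha-92)/\bigl(324(29\alpha^2+42\alpha+12)\bigr)$. Your proposal states the resulting transition points but does not establish them.

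Your sharpness plan is fine. In both middle ranges the optimal $\tau_2$ is unimodular, so $\tau_3$ plays no role.
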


\begin{proof}
For $f\in\mathcal{S}_{ex}^{\ast}$, write
\[
\frac{zf'(z)}{f(z)}=e^{\alpha\omega(z)},
\]
where $\omega\in\Omega$. Taking
\[
\omega(z)=\frac{p(z)-1}{p(z)+1},
\qquad
p(z)=1+\sum_{n=1}^{\infty} p_n z^n\in\mathcal{P},
\]
and comparing coefficients gives
\begin{align}
a_2&=\frac{\alpha p_1}{2},\label{ND7}\\
a_3&=\frac{\alpha p_2}{4}
+\frac{3\alpha^2-2\alpha}{16}p_1^2,\label{ND8}\\
a_4&=\frac{\alpha p_3}{6}
+\frac{5\alpha^2-4\alpha}{24}p_1p_2
+\frac{17\alpha^3-30\alpha^2+12\alpha}{288}p_1^3.\label{ND9}
\end{align}

The inverse logarithmic coefficients satisfy
\[
\Gamma_1=-\frac12 a_2,\qquad
\Gamma_2=-\frac12 a_3+\frac34 a_2^2,\qquad
\Gamma_3=-\frac12\left(a_4-4a_2a_3+\frac{10}{3}a_2^3\right).
\]
Hence
\[
|\Gamma_1|=\frac{\alpha}{4}|p_1|\le \frac{\alpha}{2},
\]
and
\[
\Gamma_2
=-\frac{\alpha}{8}p_2
+\frac{3\alpha^2+2\alpha}{32}p_1^2.
\]
Thus
\[
|\Gamma_2|
=\frac{\alpha}{8}
\left|p_2-\frac{3\alpha+2}{4}p_1^2\right|.
\]
Applying Lemma~\ref{L3}, we obtain
\[
|\Gamma_2|\le \frac{\alpha}{4}
\]
for $0<\alpha\le \frac23$, and
\[
|\Gamma_2|\le \frac{3\alpha^2}{8}
\]
for $\frac23<\alpha\le 1$.

We now estimate $|\Gamma_3|$. Substituting \eqref{ND7}--\eqref{ND9} into the expression for $\Gamma_3$ gives
\begin{equation}\label{ND11}
\Gamma_3
=
-\frac{\alpha}{12}p_3
+\frac{7\alpha^2+4\alpha}{48}p_1p_2
-\frac{29\alpha^3+42\alpha^2+12\alpha}{576}p_1^3.
\end{equation}

By rotational invariance of $\mathcal{P}$, we may assume $p_1\in[0,2]$. Put
\[
p_1=2\tau_1,\qquad \tau_1\in[0,1].
\]
Using Lemma~\ref{L1}, write
\[
p_2=2\tau_1^2+2(1-\tau_1^2)\tau_2,
\]
and
\[
p_3
=
2\tau_1^3
+4(1-\tau_1^2)\tau_1\tau_2
-2(1-\tau_1^2)\tau_1\tau_2^2
+2(1-\tau_1^2)(1-|\tau_2|^2)\tau_3,
\]
where $\tau_2,\tau_3\in\overline{\mathbb{D}}$.

Then
\begin{equation}\label{ND12}
\begin{aligned}
|\Gamma_3|
=
\frac{\alpha}{12}\Bigg|
&-\frac{29\alpha^2}{6}\tau_1^3
+7\alpha(1-\tau_1^2)\tau_1\tau_2\\
&+2(1-\tau_1^2)\tau_1\tau_2^2
-2(1-\tau_1^2)(1-|\tau_2|^2)\tau_3
\Bigg|.
\end{aligned}
\end{equation}

If $\tau_1=1$, then $p_1=p_2=p_3=2$, and
\[
|\Gamma_3|=\frac{29\alpha^3}{72}.
\]
If $\tau_1=0$, then
\[
|\Gamma_3|=\frac{\alpha}{12}|-2\tau_3|
\le \frac{\alpha}{6}.
\]

Assume $0<\tau_1<1$. Applying the triangle inequality to \eqref{ND12} gives
\begin{equation}\label{ND13}
|\Gamma_3|
\le
\frac{\alpha(1-\tau_1^2)}{6}
\left(
|A+B\tau_2+C\tau_2^2|+1-|\tau_2|^2
\right),
\end{equation}
where
\[
A=\frac{29\alpha^2\tau_1^3}{12(1-\tau_1^2)}>0,
\qquad
B=-\frac{7\alpha}{2}\tau_1,
\qquad
C=-\tau_1.
\]
Since $AC<0$, we apply Lemma~\ref{L2}(ii).

Define
\[
t_1=\frac{4}{7\alpha+4},
\qquad
t_d=\sqrt{\frac{84}{203\alpha^2+232\alpha+84}}.
\]

\paragraph{Subcase (a).}
The conditions are
\[
|B|<2(1-|C|),\qquad -4AC(C^{-2}-1)\le B^2.
\]
The first inequality is equivalent to $\tau_1<t_1$, while the second is always true because
\[
\frac{29}{3}<\frac{49}{4}.
\]
Thus Subcase (a) applies for $0<\tau_1<t_1$. In this case,
\[
Y_a(\tau_1)
=
1-A+\frac{B^2}{4(1-|C|)}.
\]
Hence
\[
F_a(\tau_1)
:=
\frac{\alpha(1-\tau_1^2)}{6}Y_a(\tau_1)
=
\frac{\alpha}{6}
+
\left(\frac{49\alpha^3}{96}-\frac{\alpha}{6}\right)\tau_1^2
+
\frac{31\alpha^3}{288}\tau_1^3.
\]
Differentiating,
\[
F_a'(\tau_1)
=
\frac{\alpha\tau_1}{96}
\left(
31\alpha^2\tau_1+98\alpha^2-32
\right).
\]
Thus $F_a$ has no interior maximum on $(0,t_1)$. Therefore
\[
\sup_{0<\tau_1<t_1} F_a(\tau_1)
=
\max\left\{
\frac{\alpha}{6},
F_0(\alpha)
\right\},
\]
where
\[
F_0(\alpha)
=
\frac{\alpha^2(1029\alpha^2+1238\alpha+336)}
{9(7\alpha+4)^3}.
\]

\paragraph{Subcase (b).}
The required condition
\[
B^2<-4AC(C^{-2}-1)
\]
becomes
\[
\frac{49}{4}<\frac{29}{3},
\]
which is impossible. Hence Subcase (b) is inactive.

\paragraph{Subcase (c).}
The conditions are
\[
|B|\ge 2(1-|C|),\qquad |C|(|B|+4|A|)\le |AB|.
\]
The first gives $\tau_1\ge t_1$. The second becomes
\[
(203\alpha^2-232\alpha+84)\tau_1^2\ge 84.
\]
For $0<\alpha\le1$,
\[
203\alpha^2-232\alpha+84<84,
\]
so this inequality is impossible. Hence Subcase (c) is inactive.

\paragraph{Subcase (d).}
The conditions are
\[
|B|\ge 2(1-|C|),\qquad |AB|\le |C|(|B|-4|A|).
\]
The first gives $\tau_1\ge t_1$. The second gives $\tau_1\le t_d$. Since $t_1<t_d$ for all $\alpha\in(0,1]$, Subcase (d) applies on $[t_1,t_d]$. In this case,
\[
Y_d(\tau_1)
=
-|A|+|B|+|C|.
\]
Therefore,
\[
F_d(\tau_1)
:=
\frac{\alpha(1-\tau_1^2)}{6}Y_d(\tau_1)
=
-\frac{29\alpha^3}{72}\tau_1^3
+
\frac{\alpha(7\alpha+2)}{12}\tau_1(1-\tau_1^2).
\]
Differentiating,
\[
F_d'(\tau_1)
=
\frac{\alpha}{24}
\left(
2(7\alpha+2)
-(29\alpha^2+42\alpha+12)\tau_1^2
\right).
\]
Hence the critical point is
\[
\tau_d^*
=
\sqrt{
\frac{2(7\alpha+2)}
{29\alpha^2+42\alpha+12}
}.
\]
Since $F_d''(\tau_1)<0$ for $\tau_1>0$, $F_d$ is concave.

Define $\beta$ and $\alpha_b$ by
\[
\tau_d^*(\beta)=t_1(\beta),\qquad
\tau_d^*(\alpha_b)=t_d(\alpha_b).
\]
Equivalently,
\[
343\beta^3+258\beta^2-112\beta-64=0,
\]
so $\beta\approx 0.529607$, and
\[
1421\alpha_b^3+812\alpha_b^2-712\alpha_b-336=0,
\]
so $\alpha_b\approx 0.679103$.
Thus the maximum of $F_d$ on $[t_1,t_d]$ is
\[
F_d^{\max}(\alpha)
=
\begin{cases}
F_0(\alpha), & 0<\alpha\le \beta,\\[6pt]
F_d(\tau_d^*), & \beta<\alpha\le \alpha_b,\\[6pt]
F_d(t_d), & \alpha_b<\alpha\le1.
\end{cases}
\]
A direct computation gives
\[
F_d(\tau_d^*)
=
\frac{\alpha(7\alpha+2)}{18}
\sqrt{
\frac{2(7\alpha+2)}
{29\alpha^2+42\alpha+12}
}.
\]

\paragraph{Subcase (e).}
This subcase applies for $t_d<\tau_1<1$. By Lemma~\ref{L2},
\[
F_e(\tau_1)
=
\frac{\alpha}{72\sqrt{116}}
\left(
12+(29\alpha^2-12)\tau_1^2
\right)
\sqrt{147-31\tau_1^2}.
\]
Set
\[
H(x)=[F_e(\sqrt{x})]^2,\qquad x=\tau_1^2.
\]
Then
\[
H(x)
=
\frac{\alpha^2}{72^2\cdot 116}
\left[
12+(29\alpha^2-12)x
\right]^2
(147-31x).
\]
Differentiating,
\[
H'(x)
=
\frac{\alpha^2}{72^2\cdot116}
\left[
12+(29\alpha^2-12)x
\right]
S(x),
\]
where
\[
S(x)
=
8526\alpha^2-3900-(2697\alpha^2-1116)x.
\]
The first factor is positive for $0\le x\le1$, so the monotonicity of $H$ is controlled by $S(x)$.

The zero of $S$ is
\[
x_e^*
=
\frac{2842\alpha^2-1300}{899\alpha^2-372}.
\]
Also,
\[
S(1)
=
3(1943\alpha^2-928),
\]
so $\alpha_c$ is defined by
\[
\alpha_c^2=\frac{928}{1943}.
\]
A direct calculation gives
\[
S(t_d^2)
=
\frac{
(1218\alpha+696)
(1421\alpha^3+812\alpha^2-712\alpha-336)
}
{203\alpha^2+232\alpha+84}.
\]
Therefore:
\begin{itemize}
\item for $0<\alpha\le\alpha_b$, $S(t_d^2)\le0$ and $S(1)<0$, so $S(x)\le0$ on $[t_d^2,1]$; hence $F_e$ is decreasing and its maximum is at $t_d$;
\item for $\alpha_b<\alpha<\alpha_c$, $S(t_d^2)>0$ and $S(1)<0$, so $x_e^*\in(t_d^2,1)$; hence the maximum of $F_e$ occurs at $x_e^*$;
\item for $\alpha_c\le\alpha\le1$, $S(t_d^2)\ge0$ and $S(1)\ge0$, so $S(x)\ge0$ on $[t_d^2,1]$; hence $F_e$ is increasing and its maximum is at $1$.
\end{itemize}
Thus
\[
\widetilde F_e(\alpha)
:=
\max_{t_d\le \tau_1\le1} F_e(\tau_1)
=
\begin{cases}
F_d(t_d), & 0<\alpha\le\alpha_b,\\[6pt]
F_e(\sqrt{x_e^*}), & \alpha_b<\alpha<\alpha_c,\\[6pt]
F_e(1)=\dfrac{29\alpha^3}{72}, & \alpha_c\le\alpha\le1.
\end{cases}
\]
Moreover,
\[
F_e(\sqrt{x_e^*})
=
\frac{29\alpha(49\alpha^2-16)^{3/2}}
{2232\sqrt{29\alpha^2-12}}.
\]

\subsection*{Phase transition analysis}

The global maximum is
\[
M(\alpha)
=
\max\left\{
\frac{\alpha}{6},
F_0(\alpha),
F_d^{\max}(\alpha),
\widetilde F_e(\alpha),
\frac{29\alpha^3}{72}
\right\}.
\]

\paragraph{Region I: $0<\alpha\le\alpha_\star$.}
Let
\[
R(\alpha)
=
1029\alpha^3+712\alpha^2-336\alpha-192.
\]
Then
\[
\frac{\alpha}{6}-F_0(\alpha)
=
-\frac{\alpha R(\alpha)}{18(7\alpha+4)^3}.
\]
Since $R(0)=-192<0$ and $R(0.55)>0$, the polynomial $R$ has exactly one positive root, $\alpha_0\approx 0.542829$. Since $\beta\approx 0.529607<\alpha_0$,
\[
\frac{\alpha}{6}>F_0(\alpha)
\]
for $0<\alpha\le\beta$.

For $\beta<\alpha\le\alpha_\star$,
\[
F_0(\alpha)=F_d(t_1)\le F_d(\tau_d^*).
\]
Let
\[
Q(\alpha)
=
686\alpha^3+327\alpha^2-210\alpha-92.
\]
Then $\alpha_\star$ is the unique positive root of $Q$. Also,
\[
\left(\frac{\alpha}{6}\right)^2
-
F_d(\tau_d^*)^2
=
-\frac{\alpha^2 Q(\alpha)}
{324(29\alpha^2+42\alpha+12)}.
\]
Since $Q(\alpha)<0$ for $0<\alpha<\alpha_\star$,
\[
\frac{\alpha}{6}\ge F_d(\tau_d^*).
\]
Finally,
\[
\frac{\alpha}{6}
-
\frac{29\alpha^3}{72}
=
\frac{\alpha}{72}(12-29\alpha^2)>0
\]
for $\alpha\le\alpha_\star$. Therefore
\[
M(\alpha)=\frac{\alpha}{6}
\]
for $0<\alpha\le\alpha_\star$.

\paragraph{Region II: $\alpha_\star<\alpha\le\alpha_b$.}
For $\alpha>\alpha_\star$, $Q(\alpha)>0$, so
\[
F_d(\tau_d^*)>\frac{\alpha}{6}.
\]
Also,
\[
F_d(\tau_d^*)^2-F_0(\alpha)^2>0.
\]
Thus $F_d(\tau_d^*)$ dominates $F_0(\alpha)$.

For $\alpha\le\alpha_b$,
\[
\widetilde F_e(\alpha)=F_d(t_d)\le F_d(\tau_d^*).
\]
Finally,
\[
F_d(\tau_d^*)^2
-
\left(\frac{29\alpha^3}{72}\right)^2
>0
\]
for $\alpha\le\alpha_b$. Therefore
\[
M(\alpha)=F_d(\tau_d^*)
\]
for $\alpha_\star<\alpha\le\alpha_b$.

\paragraph{Region III: $\alpha_b<\alpha<\alpha_c$.}
For this range,
\[
F_d(\tau_d^*)>F_0(\alpha),\qquad
F_d(\tau_d^*)>\frac{\alpha}{6}.
\]
A direct calculation gives
\[
\left(
\frac{F_e(\sqrt{x_e^*})}
{F_d(\tau_d^*)}
\right)^2-1
=
\frac{
(1421\alpha^2+434\alpha-340)
(1421\alpha^3+812\alpha^2-712\alpha-336)^2
}
{30752(7\alpha+2)^3(29\alpha^2-12)}
>0.
\]
Thus
\[
F_e(\sqrt{x_e^*})>F_d(\tau_d^*).
\]
Also,
\[
F_e(\sqrt{x_e^*})^2
-
\left(\frac{29\alpha^3}{72}\right)^2
=
\frac{
841\alpha^2(5\alpha^2-1)(67\alpha^2-32)^2
}
{1245456(29\alpha^2-12)}
>0.
\]
Therefore
\[
M(\alpha)=F_e(\sqrt{x_e^*})
\]
for $\alpha_b<\alpha<\alpha_c$.

\paragraph{Region IV: $\alpha_c\le\alpha\le1$.}
For $\alpha\ge\alpha_c$, $S(x)\ge0$ on $[t_d^2,1]$, so
\[
\widetilde F_e(\alpha)=F_e(1)=\frac{29\alpha^3}{72}.
\]
Also,
\[
\frac{29\alpha^3}{72}\ge \frac{\alpha}{6},\qquad
\frac{29\alpha^3}{72}\ge F_0(\alpha),
\]
and
\[
\left(\frac{29\alpha^3}{72}\right)^2
-
F_d(t_d)^2
\ge0.
\]
Hence
\[
M(\alpha)=\frac{29\alpha^3}{72}
\]
for $\alpha_c\le\alpha\le1$.

Combining the four regions proves \eqref{ND3}.

\subsection*{Sharpness and extremal functions}

The extremal functions are obtained from
\begin{equation}\label{pra27}
f(z)
=
z\exp\left(
\int_0^z \frac{1}{t}
\left[
\exp\left(
\alpha\frac{p(t)-1}{p(t)+1}
\right)
-1
\right]
dt
\right).
\end{equation}

\paragraph{Region I.}
Choose $\tau_1=0$, $\tau_2=0$, $\tau_3=1$. Then $p_1=p_2=0$, $p_3=2$, and
\[
|\Gamma_3|=\frac{\alpha}{6}.
\]
The extremal function is
\[
f_{\mathrm{tri}}(z)
=
z\exp\left(
\int_0^z \frac{e^{\alpha t^3}-1}{t}\,dt
\right).
\]

\paragraph{Region II.}
Choose $\tau_1=\tau_d^*$ and $\tau_2=1$. Since $|\tau_2|=1$, the $\tau_3$-term vanishes. Lemma~\ref{L1} guarantees the existence of the corresponding $p\in\mathcal{P}$, and equality is attained.

\paragraph{Region III.}
Choose $\tau_1=\sqrt{x_e^*}$ and $\tau_2=e^{i\theta}$, where
\[
\cos\theta
=
\frac{7\left[12-(29\alpha^2+12)x_e^*\right]}
{232\alpha x_e^*}.
\]
For $\alpha_b<\alpha<\alpha_c$, we have $t_d^2<x_e^*<1$. Since
\[
1-\cos\theta
=
\frac{
(203\alpha^2+232\alpha+84)x_e^*-84
}
{232\alpha x_e^*}
>0,
\]
and
\[
1+\cos\theta
=
\frac{
84-(203\alpha^2-232\alpha+84)x_e^*
}
{232\alpha x_e^*}
>0,
\]
we get $|\cos\theta|<1$. Thus such $\theta$ exists, and equality is attained.

\paragraph{Region IV.}
Choose $\tau_1=1$. Then $p_1=p_2=p_3=2$, and
\[
|\Gamma_3|=\frac{29\alpha^3}{72}.
\]
The extremal function is the Koebe-type function
\[
f_{\mathrm{Koebe}}(z)
=
z\exp\left(
\int_0^z \frac{e^{\alpha t}-1}{t}\,dt
\right).
\]

This completes the proof.
\end{proof}
\section{Bounds for the differences of inverse logarithmic coefficients}
The Bieberbach conjecture was proved in 1985 by de Branges~\cite{LDB1}, who established that the Taylor coefficients of any function $f \in \mathcal{S}$ of the form~\eqref{eq1} satisfy the inequality $|a_n| \le n$ for all $n \ge 2$. For the starlike subclass $\mathcal{S}^\ast$, the coefficients are bounded by $n$, a result proved by Leung~\cite{Leung1978}. Within the class of convex functions, Li and Sugawa~\cite{LiSugawa} investigated the difference of adjacent coefficients, while further results on successive differences were obtained in~\cite{Peng2019, Arora2019, Arora2022, Arora2023}.

Recently, attention has shifted toward the difference of logarithmic coefficients. In 2024, Lecko and Partyka~\cite{Lecko2023} utilized the Loewner differential equation to establish sharp bounds for $|\gamma_2| - |\gamma_1|$ within the class $\mathcal{S}$. Following this direction, Kumar and Cho~\cite{Kumar2023} derived sharp estimates for the coefficient difference for certain subclasses of $\mathcal{S}$. Motivated by these investigations, this section focuses on establishing sharp upper and lower bounds for the difference of the initial inverse logarithmic coefficients, namely $|\Gamma_2| - |\Gamma_1|$, for functions belonging to the subclass $\mathcal{S}_{ex}^{\ast}$.

\begin{theo}\label{T2}
Let $0 < \alpha \le 1$. If $f \in \mathcal{S}_{ex}^{\ast}$ and the inverse logarithmic coefficients $\Gamma_n$ ($n = 1,2$) are defined by \eqref{IG1}, then
\begin{equation}\label{ND15}
-\frac{\alpha}{2}\sqrt{\frac{2}{3\alpha + 2}} \le |\Gamma_2|-|\Gamma_1| \le \frac{\alpha}{4}.
\end{equation}
These inequalities are sharp for all $\alpha \in (0,1]$.
\end{theo}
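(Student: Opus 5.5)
The plan is to reduce both inequalities in \eqref{ND15} to Lemma~\ref{L6} of Sim and Thomas, using the representation of $\Gamma_1,\Gamma_2$ in terms of Carath\'eodory coefficients already derived in the proof of Theorem~\ref{T1}.

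\emph{Setting up Lemma~\ref{L6}.} From \eqref{ND7}--\eqref{ND8} we have
\[
\Gamma_1=-\frac{\alpha}{4}p_1,\qquad \Gamma_2=-\frac{\alpha}{8}p_2+\frac{3\alpha^2+2\alpha}{32}p_1^2 .
\]
Hence $|\Gamma_2|-|\Gamma_1|=\Phi(p_1,p_2)=|Kp_1^2+Lp_2|-|Jp_1|$ with
\[
K=\frac{3\alpha^2+2\alpha}{32},\qquad L=-\frac{\alpha}{8},\qquad J=\frac{\alpha}{4}.
\]
These satisfy $J\ge0$ and $L\in\mathbb{R}$. Two auxiliary quantities enter the case conditions: $|2K+L|=\frac{3\alpha^2}{16}$ and $M=|4K+2L|=\frac{3\alpha^2}{8}$.

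\emph{Upper bound.} The first alternative of Lemma~\ref{L6} requires $|2K+L|\ge |L|+J$, i.e. $\frac{3\alpha^2}{16}\ge\frac{3\alpha}{8}$. This is equivalent to $\alpha\ge2$, so it never holds for $\alpha\in(0,1]$. Therefore $\Phi\le 2|L|=\frac{\alpha}{4}$.

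\emph{Lower bound.} I check which branch of the estimate for $-\Phi$ applies. The middle condition $J^2\le 2|L|(M+2|L|)$ reads
\[
\frac{\alpha^2}{16}\le\frac{\alpha}{4}\left(\frac{3\alpha^2}{8}+\frac{\alpha}{4}\right)=\frac{\alpha^2(3\alpha+2)}{32},
\]
which is equivalent to $3\alpha\ge0$ and so always holds. That branch gives
\[
-\Phi\le 2J\sqrt{\frac{2|L|}{M+2|L|}}=\frac{\alpha}{2}\sqrt{\frac{\alpha/4}{3\alpha^2/8+\alpha/4}}=\frac{\alpha}{2}\sqrt{\frac{2}{3\alpha+2}},
\]
which is the left-hand side of \eqref{ND15}. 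If the branches of Lemma~\ref{L6} overlap (for instance if $J\ge M+2|L|$ could also occur), I would verify that the chosen branch gives the valid bound. Since the extremal construction below attains the value exactly, consistency is guaranteed in any case.

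\emph{Sharpness.} For the upper bound I take $p(z)=(1+z^2)/(1-z^2)$, so $p_1=0$ and $p_2=2$. Then $\Gamma_1=0$ and $|\Gamma_2|=\alpha/4$; the extremal function is \eqref{pra27} with this $p$, i.e. $f(z)=z\exp\bigl(\int_0^z (e^{\alpha t^2}-1)/t\,dt\bigr)$.

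For the lower bound, the idea is to make $\Gamma_2$ vanish while $|p_1|$ is as large as possible. I write $p_1=2\tau_1$ and $p_2=2\tau_1^2+2(1-\tau_1^2)\tau_2$ with $\tau_1\in[0,1)$ and $\tau_2$ real, as in Lemma~\ref{L1}. Then $\Gamma_2=0$ exactly when
\[
\tau_2=\frac{3\alpha\tau_1^2}{2(1-\tau_1^2)},
\]
and this value lies in $[0,1]$ precisely for $\tau_1^2\le 2/(3\alpha+2)$. On that range the functional equals $-\alpha\tau_1/2$, which is smallest at the endpoint $\tau_1=\sqrt{2/(3\alpha+2)}$, where $\tau_2=1$. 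For $\tau_1\in\mathbb{D}$ and $\tau_2\in\mathbb{T}$, Lemma~\ref{L1} supplies the unique $p\in\mathcal{P}$ with these coefficients, and \eqref{pra27} gives the extremal $f$.

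The main obstacle is this lower-bound sharpness step. Lemma~\ref{L6} as quoted does not describe its equality cases, so the extremal pair $(\tau_1,\tau_2)$ has to be found by hand via the vanishing-$\Gamma_2$ ansatz above. One must then check that this $\tau_1$ is admissible, i.e. lies in $(0,1)$, for every $\alpha\in(0,1]$; this holds because $2/(3\alpha+2)<1$ for $\alpha>0$.
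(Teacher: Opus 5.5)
Your proposal is correct and follows the paper's argument. The paper applies Lemma~\ref{L6} to $\frac{32}{\alpha}\bigl(|\Gamma_2|-|\Gamma_1|\bigr)$ with $K=3\alpha+2$, $L=-4$, $J=8$, which is your choice of $K,L,J$ rescaled by $\alpha/32$ (the lemma is homogeneous), and it uses the same extremal data $p_1=0,\ p_2=2$ and $p_1=2\sqrt{2/(3\alpha+2)},\ p_2=2$. Your worry about overlapping branches is unnecessary, since here $J\ge M+2|L|$ reduces to $3\alpha^2/8\le 0$, which never holds.
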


\begin{proof}
Let $f \in \mathcal{S}_{ex}^{\ast}$. Using the expressions for $\Gamma_1$ and $\Gamma_2$ in terms of the parameters $p_1$ and $p_2$, the difference is given by:
\begin{align}
|\Gamma_2| - |\Gamma_1|
&= \left| -\frac{\alpha}{8}p_2 + \frac{3\alpha^2+2\alpha}{32}p_1^2 \right| - \left| -\frac{\alpha}{4}p_1 \right| \nonumber \\
&= \frac{\alpha}{32}\left( |(3\alpha+2)p_1^2 - 4p_2| - 8|p_1| \right) \nonumber \\
&= \frac{\alpha}{32}\Phi(p_1, p_2).
\label{ND16}
\end{align}
The expression $\Phi(p_1, p_2) = |Kp_1^2 + Lp_2| - |Jp_1|$ satisfies the conditions of Lemma \ref{L6} with parameters $K = 3\alpha+2$, $L = -4$, and $J = 8$. It follows that $M = |4K + 2L| = 12\alpha$ and $|2K + L| = 6\alpha$.

To determine the upper bound, we check the condition $|2K + L| \ge |L| + J$, which requires $6\alpha \ge 12$, or $\alpha \ge 2$. Since $0 < \alpha \le 1$, this condition is not satisfied, and the alternative branch of Lemma \ref{L6} applies, giving $\Phi(p_1, p_2) \le 2|L| = 8$. Substituting this value into \eqref{ND16} yields the upper bound $|\Gamma_2| - |\Gamma_1| \le \frac{\alpha}{4}$.

To determine the lower bound, we apply the condition $J^2 \le 2|L|(M + 2|L|)$, which simplifies to $64 \le 8(12\alpha + 8)$, or $12\alpha \ge 0$. This inequality holds for all $\alpha \in (0,1]$, and the corresponding branch of Lemma \ref{L6} gives $-\Phi(p_1, p_2) \le 2J \sqrt{\frac{2|L|}{M + 2|L|}} = 16\sqrt{\frac{2}{3\alpha + 2}}$. Substituting this into \eqref{ND16} yields the lower bound.
\subsection*{Sharpness of the Estimates and Extremal Functions}
\begin{itemize}
\item The upper bound is reached for the odd starlike function $f_{\text{odd}} \in \mathcal{S}_{ex}^{\ast}$ defined by:
\[
f_{\text{odd}}(z) = z\exp\left(\int_0^z \frac{e^{\alpha t^2}-1}{t}dt\right),
\]
which corresponds to $p_1=0$ and $p_2=2$. 

\item The lower bound is attained by the function $f_{\text{spiral}} \in \mathcal{S}_{ex}^{\ast}$ defined by:
\[
f_{\text{spiral}}(z) = z\exp\left(\int_0^z \frac{e^{\alpha \omega(t)}-1}{t}dt\right), \quad \text{where} \quad \omega(z)=\frac{z\left(\sqrt{2}+\sqrt{3\alpha+2}z\right)}{\sqrt{3\alpha+2}+\sqrt{2}z},
\]
which corresponds to $p_1=2\sqrt{\frac{2}{3\alpha+2}}$ and $p_2=2$.
\end{itemize}
\end{proof}

\section{Hankel Determinants for the Inverse Logarithmic Coefficients}

\begin{theo}\label{T3}
Let $0<\alpha\le 1$. If $f\in \mathcal{S}_{ex}^{\ast}$ and the inverse logarithmic coefficients $\Gamma_n$ $(n=1,2,3)$ are defined by \eqref{IG1}, then the second-order inverse logarithmic Hankel determinant $H_{2,1}(F_{f^{-1}}/2)$ satisfies the sharp inequalities:
\begin{equation}\label{ND18}
\left| H_{2,1}\left(F_{f^{-1}}/2\right)\right| \le
\begin{cases}
\dfrac{\alpha^2}{16}, & 0<\alpha\le \dfrac25,\\[8pt]
\dfrac{\alpha^2(15\alpha^2+10\alpha+4)}{4(35\alpha^2+60\alpha+12)}, & \dfrac25<\alpha\le 1.
\end{cases}
\end{equation}
These inequalities are sharp for all $\alpha\in(0,1]$.
\end{theo}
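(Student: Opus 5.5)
The plan is to follow the scheme used for Theorem~\ref{T1}. First I will write $H_{2,1}(F_{f^{-1}}/2)=\Gamma_1\Gamma_3-\Gamma_2^2$ as a polynomial in $p_1,p_2,p_3$. Using $\Gamma_1=-\alpha p_1/4$, the formula for $\Gamma_2$ obtained in the proof of Theorem~\ref{T1}, and \eqref{ND11}, this gives
\[
H_{2,1}=\frac{\alpha^2}{48}p_1p_3-\frac{\alpha^2}{64}p_2^2+c_1(\alpha)\,p_1^2p_2+c_2(\alpha)\,p_1^4 .
\]
Here $c_1,c_2$ are explicit: $c_1$ is linear in $\alpha$ and $c_2$ is quadratic in $\alpha$, after the common factor $\alpha^2$ is pulled out.

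By rotational invariance I take $p_1=2\tau_1$ with $\tau_1\in[0,1]$, and then substitute $p_2,p_3$ from Lemma~\ref{L1}. The endpoint cases are immediate:
\begin{itemize}
\item $\tau_1=0$ gives $H_{2,1}=-\alpha^2p_2^2/64$, hence $|H_{2,1}|\le\alpha^2/16$, with equality for $p_2=2$, i.e.\ for $f_{\mathrm{odd}}$.
\item $\tau_1=1$ gives an explicit value $c(\alpha)\alpha^4$, which must be checked against the claimed bound.
\end{itemize}

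For $0<\tau_1<1$, the only place $\tau_3$ enters is the term $\frac{\alpha^2}{6}\tau_1(1-\tau_1^2)(1-|\tau_2|^2)\tau_3$. The triangle inequality then gives
\[
|H_{2,1}|\le \frac{\alpha^2}{16}\Bigl(|A+B\tau_2+C\tau_2^2|+\tfrac{8}{3}\tau_1(1-\tau_1^2)(1-|\tau_2|^2)\Bigr).
\]
Here $C=-(1-\tau_1^2)^2$ comes from $p_2^2$, $B$ is a multiple of $\tau_1^2(1-\tau_1^2)$, and $A$ is a multiple of $\tau_1^4$. Because the $1-|\tau_2|^2$ coefficient is not normalised to $1$, I will not force Lemma~\ref{L2}. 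Instead I set $x=|\tau_2|\in[0,1]$ and use $|A+B\tau_2+C\tau_2^2|\le |A|+|B|x+|C|x^2$. This reduces the problem to maximising
\[
G(\tau_1,x)=|A|+|B|x+|C|x^2+\tfrac{8}{3}\tau_1(1-\tau_1^2)(1-x^2)
\]
over the square $[0,1]^2$.

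$G$ is quadratic in $x$, with leading coefficient $(1-\tau_1^2)^2-\frac83\tau_1(1-\tau_1^2)$. I therefore expect the maximum to lie on $x=1$ or $x=0$, or at the vertex in $x$, depending on the sign of that coefficient. On the edge $x=1$ the bound becomes a polynomial in $y=\tau_1^2$. Its derivative should be, up to a positive factor, exactly the cubic $\widetilde H(y)$ of Lemma~\ref{lem:Hsign}. That lemma then says the edge function has no interior local maximum in $y$: for small $\alpha$ it is decreasing, so the maximum is at $y=0$, giving $\alpha^2/16$. Otherwise the maximum is at an endpoint $y=0$ or $y=1$, and comparing the two values is elementary.

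The second branch $\frac{\alpha^2(15\alpha^2+10\alpha+4)}{4(35\alpha^2+60\alpha+12)}$ has a denominator that looks like the result of maximising a ratio. So I expect it to come from the interior vertex in $x$, with $\tau_1$ then optimised. Alternatively it may come from a configuration with $\tau_2$ real negative, where the terms $A$, $B\tau_2$, $C\tau_2^2$ align in sign. I would locate the critical point by solving $\partial_{\tau_1}G=\partial_xG=0$ and check that it gives this value precisely when $\alpha>2/5$. The threshold $\alpha=2/5$ should be where this critical point enters the admissible region, i.e.\ where the quadratic coefficient in $x$ or the relevant critical $y$ crosses an endpoint.

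The main obstacle is this two-variable optimisation: proving that the candidate critical value dominates the edge values $x=0$, $x=1$ and the corner $\tau_1=1$ for $\alpha\in(2/5,1]$, and that $\alpha^2/16$ dominates for $\alpha\le2/5$. I would handle it by forming differences of squared candidate values and factoring them into polynomials in $\alpha$ of fixed sign, as in the phase-transition analysis of Theorem~\ref{T1}.

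Sharpness for $\alpha\le 2/5$ comes from $f_{\mathrm{odd}}$. For $\alpha>2/5$ I would take $\tau_1$ and $\tau_2$ at the maximiser, choosing $\arg\tau_2$ and $\tau_3\in\mathbb{T}$ so that every triangle inequality becomes an equality. The extremal function is then built from \eqref{pra27} through the explicit $p$ of Lemma~\ref{L1}.
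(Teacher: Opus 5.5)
There is a genuine gap, and it lies exactly where you leave the paper's route: the bound $|A+B\tau_2+C\tau_2^2|\le |A|+|B|x+|C|x^2$ with $x=|\tau_2|$. Here $A>0$ while $B,C<0$, so the three terms can never be aligned. On $|\tau_2|=1$ the true maximum of $|A+B\tau_2+C\tau_2^2|$ is either $|A+B+C|=-|A|+|B|+|C|$ or $(|A|+|C|)\sqrt{1-B^2/(4AC)}$, never $|A|+|B|+|C|$.

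This loss is fatal for the second branch. First fix two slips. The $\tau_3$-coefficient in $H_{2,1}$ is $\frac{\alpha^2}{48}\cdot 2\cdot 2=\frac{\alpha^2}{12}$, not $\frac{\alpha^2}{6}$. Also, $C$ receives a further contribution $-\frac43\tau_1^2(1-\tau_1^2)$ from the $\tau_2^2$-term of $p_1p_3$, which you dropped. With these corrections (prefactor $\alpha^2/16$, $y=\tau_1^2$) you get $|A|=\frac{35\alpha^2}{36}y^2$, $|B|=\frac{5\alpha}{3}y(1-y)$, $|C|=\frac13(1-y)(3+y)$, and $\tau_3$-weight $\frac43\sqrt y\,(1-y)$. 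Your edge function is then
\[
G(y,1)=1+\frac{5\alpha-2}{3}\,y+\frac{35\alpha^2-60\alpha-12}{36}\,y^2 .
\]
For $\alpha>\frac25$ its maximum is $1+\frac{(5\alpha-2)^2}{12+60\alpha-35\alpha^2}$. This is strictly larger than the required $1+\frac{(5\alpha-2)^2}{12+60\alpha+35\alpha^2}$; at $\alpha=1$ you get $\frac{23}{296}\approx0.0777$ against $\frac{29}{428}\approx0.0678$. So the corrected scheme does prove the first branch: for $\alpha\le\frac25$ both coefficients of $G(y,1)$ are nonpositive, and $G(y,0)<1$. But no choice of critical point in $(\tau_1,x)$ can recover the second branch, because the overestimate is built into $G$ itself.

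The normalisation that kept you away from Lemma~\ref{L2} is not a real obstacle. The paper factors out $\frac{\alpha^2\tau_1(1-\tau_1^2)}{12}$, obtaining $A=\frac{35\alpha^2\tau_1^3}{48(1-\tau_1^2)}$, $B=-\frac{5\alpha\tau_1}{4}$ and $C=-\frac{3+\tau_1^2}{4\tau_1}$ with $|C|\ge1$, and then applies Lemma~\ref{L2}(ii).

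For $y\le y_d$ the maximum is $-|A|+|B|+|C|$, attained at $\tau_2=1$ with genuine cancellation against $A$. Maximising this concave quadratic in $y$ at $y_v$ gives the second branch; $y_v$ is admissible precisely when $\alpha>\frac25$.

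For $y>y_d$ the maximum is $(|A|+|C|)\sqrt{1-B^2/(4AC)}$, attained at a non-real $\tau_2$. The cubic $\widetilde H$ is the derivative factor of the square of this function, and Lemma~\ref{lem:Hsign} pushes its maximum to $y=y_d$ or $y=1$. It is not the derivative of your $x=1$ edge function, which is only quadratic in $y$ because the $\tau_3$-term vanishes there.

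Likewise, with the correct coefficients $G$ is convex in $x$: its $x^2$-coefficient is $\frac{1-y}{3}(1-\sqrt y)(3-\sqrt y)\ge0$, the normalised form of $|C|\ge1$. So the second branch cannot come from an interior vertex in $x$ either. To close the gap you need the exact maximum over $\tau_2\in\overline{\mathbb D}$. You can get it via Lemma~\ref{L2}, or directly by maximising the concave quadratic in $\cos(\arg\tau_2)$ on $|\tau_2|=1$, after checking that the boundary dominates, which here follows from $|C|\ge1$.
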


\begin{proof}
For $f\in\mathcal{S}_{ex}^{\ast}$, the direct coefficients are expressed via $p\in\mathcal{P}$, and substituting them into
\[
H_{2,1}(F_{f^{-1}}/2)=\Gamma_1\Gamma_3-\Gamma_2^2
\]
yields
\begin{equation}\label{ND20}
\begin{aligned}
H_{2,1}\!\left(F_{f^{-1}}/2\right)
&=
\frac{\alpha^2(35\alpha^2+60\alpha+12)}{9216}p_1^4
-\frac{\alpha^2(5\alpha+2)}{384}p_1^2p_2\\
&\quad
+\frac{\alpha^2}{48}p_1p_3
-\frac{\alpha^2}{64}p_2^2.
\end{aligned}
\end{equation}

With $p_1=2x$, $x\in[0,1]$, and the parametrization of Lemma~\ref{L1} for $p_2,p_3$, we obtain
\begin{equation}\label{ND21}
\begin{aligned}
H_{2,1}\!\left(F_{f^{-1}}/2\right)
&=
\frac{35\alpha^4}{576}x^4
-\frac{5\alpha^3}{48}x^2(1-x^2)\tau_2
-\frac{\alpha^2}{48}(1-x^2)(x^2+3)\tau_2^2\\
&\quad
+\frac{\alpha^2}{12}x(1-x^2)(1-|\tau_2|^2)\tau_3.
\end{aligned}
\end{equation}

Using $|\tau_3|\le 1$ and the triangle inequality,
\begin{equation}\label{ND22}
\bigl|H_{2,1}(F_{f^{-1}}/2)\bigr|
\le
\frac{\alpha^2 x(1-x^2)}{12}
\Bigl(
|A+B\tau_2+C\tau_2^2|+1-|\tau_2|^2
\Bigr),
\end{equation}
where
\[
A=\frac{35\alpha^2 x^3}{48(1-x^2)}>0,
\qquad
B=-\frac{5\alpha}{4}x,
\qquad
C=-\frac{3+x^2}{4x}<0.
\]
Since $AC<0$, we apply Lemma~\ref{L2}(ii).

\subsection*{Subcase analysis}

For $0<x\le 1$,
\[
|C|=\frac{3+x^2}{4x}\ge 1.
\]
Hence $2(1-|C|)\le 0$, while $|B|>0$. Therefore
\[
|B|<2(1-|C|)
\]
is impossible, so Subcase (a) is empty.

Subcase (b) requires
\[
B^2<\min\left\{4(1+|C|)^2,\,-4AC(C^{-2}-1)\right\}.
\]
Since $|C|\ge 1$, we have $C^{-2}-1\le 0$, and because $A>0$ and $C<0$, it follows that
\[
-4AC(C^{-2}-1)\le 0.
\]
But $B^2>0$, so the second condition cannot hold. Thus Subcase (b) is empty.

\paragraph{Subcase (c).}
Subcase (c) requires
\[
|B|\ge 2(1-|C|)
\]
and
\[
|C|(|B|+4|A|)\le |AB|.
\]
The first condition holds automatically. The second condition, after substituting $A,B,C$ and writing $y=x^2$, becomes
\[
180+(420\alpha-120)y+(140\alpha-60-175\alpha^2)y^2\le 0.
\]
The leading coefficient is negative because
\[
140\alpha-60-175\alpha^2<0
\]
for all $\alpha>0$. At $y=0$, the left-hand side equals $180>0$, and at $y=1$ it equals
\[
35\alpha(16-5\alpha)>0
\]
for $0<\alpha\le 1$. Since the quadratic is concave and positive at both endpoints of $[0,1]$, it is positive on the whole interval. Hence Subcase (c) is empty.

\paragraph{Subcase (d).}
Subcase (d) is governed by
\[
|AB|\le |C|(|B|-4|A|).
\]
With $y=x^2$, this is equivalent to
\begin{equation}\label{Qineq}
Q(y):=(175\alpha^2+140\alpha+60)y^2+(420\alpha+120)y-180\le 0.
\end{equation}
Since $Q(0)=-180<0$ and the leading coefficient is positive, $Q(y)\le 0$ exactly for
\[
0\le y\le y_d,
\]
where $y_d$ is the unique positive root of $Q(y)=0$. One checks that $y_d<1$ for all $\alpha\in(0,1]$.

In this range, Lemma~\ref{L2} gives
\[
|A+B\tau_2+C\tau_2^2|+1-|\tau_2|^2
=
-|A|+|B|+|C|.
\]
Substituting this into \eqref{ND22} and simplifying yields
\begin{equation}\label{phi_d}
\Phi_d(x)
=
\frac{\alpha^2}{576}
\Bigl[
36+12(5\alpha-2)x^2-(35\alpha^2+60\alpha+12)x^4
\Bigr],
\qquad x\in[0,\sqrt{y_d}].
\end{equation}

\paragraph{Maximizing $\Phi_d$ on $[0,\sqrt{y_d}]$.}
Write $y=x^2$. Then
\[
\Phi_d(x)
=
\frac{\alpha^2}{576}
\left[
36+12(5\alpha-2)y-(35\alpha^2+60\alpha+12)y^2
\right].
\]
This is a concave quadratic in $y$. Its vertex is at
\[
y_v=\frac{6(5\alpha-2)}{35\alpha^2+60\alpha+12}.
\]

If $0<\alpha\le \frac25$, then $y_v\le 0$. Hence $\Phi_d$ is decreasing on $[0,y_d]$, and
\[
\max_{[0,y_d]}\Phi_d=\Phi_d(0)=\frac{\alpha^2}{16}.
\]

If $\frac25<\alpha\le 1$, then $y_v>0$. A direct calculation gives $Q(y_v)<0$, so $y_v<y_d$. Hence
\[
\max_{[0,y_d]}\Phi_d
=
\Phi_d(\sqrt{y_v})
=
\frac{\alpha^2}{576}
\left(
36+\frac{36(5\alpha-2)^2}{35\alpha^2+60\alpha+12}
\right)
=
\frac{\alpha^2(15\alpha^2+10\alpha+4)}
{4(35\alpha^2+60\alpha+12)}.
\]

\paragraph{Subcase (e): the region $y\in(y_d,1)$.}
For $y>y_d$, Lemma~\ref{L2} gives
\[
|A+B\tau_2+C\tau_2^2|+1-|\tau_2|^2
=
(|C|+|A|)\sqrt{1-\frac{B^2}{4AC}}.
\]
A direct computation gives
\[
|C|+|A|
=
\frac{36-24x^2+(35\alpha^2-12)x^4}{48x(1-x^2)},
\]
and
\[
1-\frac{B^2}{4AC}
=
\frac{36-8x^2}{7(3+x^2)}.
\]
Therefore
\begin{equation}\label{phi_e_correct}
\Phi_e(x)
=
\frac{\alpha^2}{576}
\left[
36-24x^2+(35\alpha^2-12)x^4
\right]
\sqrt{\frac{36-8x^2}{7(3+x^2)}},
\qquad x\in(\sqrt{y_d},1).
\end{equation}

Set $y=x^2$ and consider
\[
F(y)=[\Phi_e(\sqrt{y})]^2
=
\frac{\alpha^4}{576^2\cdot 7}
\left[
(35\alpha^2-12)y^2-24y+36
\right]^2
\frac{36-8y}{3+y},
\qquad y\in(y_d,1).
\]

Differentiating,
\[
F'(y)
=
\frac{\alpha^4}{576^2\cdot 7}
\cdot
\frac{P(y)}{(3+y)^2}
\cdot
\widetilde H(y),
\]
where
\[
P(y)=(35\alpha^2-12)y^2-24y+36,
\]
and
\begin{equation}\label{Htilde}
\widetilde H(y)
=
(-1120\alpha^2+384)y^3
+
(-420\alpha^2+528)y^2
+
(15120\alpha^2-4320)y
-
7344.
\end{equation}

Now we need the elementary sign lemma \eqref{lem:Hsign}.

We also note that
\[
P(y)=(35d-12)y^2-24y+36>0
\]
for $0\le y\le1$. Indeed, if $d\le \frac{12}{35}$, then the coefficient is non-positive, the parabola is concave, and since $P(0)=36>0$ and $P(1)=35d>0$, we have $P(y)>0$. If $d>\frac{12}{35}$, then the vertex satisfies
\[
y_0=\frac{12}{35d-12}.
\]
If $d\le \frac{24}{35}$, then $y_0\ge1$, so $P$ is decreasing on $[0,1]$, and $P(1)=35d>0$ gives $P(y)>0$. If $d>\frac{24}{35}$, then
\[
P(y_0)=36-\frac{144}{35d-12}>24>0.
\]
Hence $P(y)>0$ on $[0,1]$.

Therefore the sign of $F'(y)$ is the same as the sign of $\widetilde H(y)$ on $(y_d,1)$. By Lemma~\ref{lem:Hsign}, $\widetilde H$ cannot change sign from positive to negative on this interval. Consequently, $F$ has no interior local maximum on $(y_d,1)$. Hence the maximum of $\Phi_e$ on $[\sqrt{y_d},1)$ occurs at one of the endpoints.

At the left endpoint,
\[
\Phi_e(\sqrt{y_d})=\Phi_d(\sqrt{y_d})
\le
\max_{[0,\sqrt{y_d}]}\Phi_d.
\]

At the right endpoint $y=1$, we have $x=1$. Substituting $x=1$ into \eqref{ND21}, all terms containing $(1-x^2)$ vanish, and we obtain
\[
H_{2,1}\!\left(F_{f^{-1}}/2\right)
=
\frac{35\alpha^4}{576}.
\]
Thus
\begin{equation}\label{boundary_hankel_correct}
|H_{2,1}(F_{f^{-1}}/2)|_{x=1}
=
\frac{35\alpha^4}{576}.
\end{equation}

We now compare the candidate maxima.

\paragraph{Left boundary: $x=0$.}
At $x=0$,
\[
|H_{2,1}(F_{f^{-1}}/2)|_{x=0}
=
\frac{\alpha^2}{16}
=
\frac{36\alpha^2}{576}.
\]

\paragraph{Right boundary: $x=1$.}
At $x=1$,
\[
|H_{2,1}(F_{f^{-1}}/2)|_{x=1}
=
\frac{35\alpha^4}{576}.
\]
Since $\alpha\le 1$, we have $35\alpha^2\le 35<36$, so
\[
\frac{35\alpha^4}{576}
<
\frac{36\alpha^2}{576}
=
\frac{\alpha^2}{16}.
\]
Therefore the right boundary never dominates the left boundary.

\paragraph{Subcase (d) vertex.}
For $\frac25<\alpha\le 1$, the Subcase (d) maximum is
\[
C_d(\alpha)
=
\frac{\alpha^2(15\alpha^2+10\alpha+4)}{4(35\alpha^2+60\alpha+12)}.
\]
Comparing with $\alpha^2/16$,
\[
C_d(\alpha)-\frac{\alpha^2}{16}
=
\frac{\alpha^2(5\alpha-2)^2}
{16(35\alpha^2+60\alpha+12)}
\ge0.
\]
Thus
\[
C_d(\alpha)\ge \frac{\alpha^2}{16}
\]
for all $\alpha\in(\frac25,1]$, with equality precisely at $\alpha=\frac25$.

Therefore the global maximum is
\[
M_H(\alpha)
=
\begin{cases}
\dfrac{\alpha^2}{16}, & 0<\alpha\le \dfrac25,\\[8pt]
\dfrac{\alpha^2(15\alpha^2+10\alpha+4)}{4(35\alpha^2+60\alpha+12)}, & \dfrac25<\alpha\le 1.
\end{cases}
\]

This proves \eqref{ND18}.

\subsection*{Sharpness}

For $0<\alpha\le \frac25$, choose $x=0$, i.e.\ $p_1=0$. Then from \eqref{ND21},
\[
H_{2,1}(F_{f^{-1}}/2)
=
-\frac{\alpha^2}{48}\cdot 3\tau_2^2
=
-\frac{\alpha^2}{16}\tau_2^2.
\]
Taking $\tau_2=1$ gives
\[
|H_{2,1}(F_{f^{-1}}/2)|=\frac{\alpha^2}{16}.
\]
The corresponding extremal function is the odd starlike function
\[
f_{\rm odd}(z)
=
z\exp\left(
\int_0^z \frac{e^{\alpha t^2}-1}{t}\,dt
\right).
\]

For $\frac25<\alpha\le 1$, choose
\[
\tau_1=\sqrt{y_v},
\qquad
\tau_2=1,
\qquad
\tau_3=-1
\]
(the value of $\tau_3$ is immaterial because $|\tau_2|=1$). With $A>0$, $B<0$, $C<0$, and the Subcase (d) condition $A\le |B|+|C|$, we have
\[
|A+B+C|
=
|A-|B|-|C||
=
|B|+|C|-A
=
-A+|B|+|C|.
\]
Thus the required equality in Lemma~\ref{L2} is achieved.

Lemma~\ref{L1} guarantees a $p\in\mathcal{P}$ with these parameters, and the associated $f_{\rm Hankel}\in\mathcal{S}_{ex}^{\ast}$ attains the bound
\[
\frac{\alpha^2(15\alpha^2+10\alpha+4)}{4(35\alpha^2+60\alpha+12)}.
\]

This completes the proof.
\end{proof}
\section{Hermitian--Toeplitz Determinants for the Class $\mathcal{S}_{ex}^{\ast}$}

For a sequence $\{a_k\}_{k=2}^{\infty}$ of coefficients of a normalized analytic function $f\in\mathcal{A}$ of the form \eqref{eq1}, the Hermitian--Toeplitz determinant of order $q$ starting at index $n$ is defined by
\begin{equation}\label{ND24}
T_{q,n}(f) :=
\begin{vmatrix}
a_n & a_{n+1} & \cdots & a_{n+q-1} \\
\overline{a_{n+1}} & a_n & \cdots & a_{n+q-2} \\
\vdots & \vdots & \ddots & \vdots \\
\overline{a_{n+q-1}} & \overline{a_{n+q-2}} & \cdots & a_n
\end{vmatrix}.
\end{equation}
Setting $q=3$ and $n=1$ in \eqref{ND24} and using $a_1=1$, the third-order Hermitian--Toeplitz determinant reduces to
\begin{equation}\label{ND25}
T_{3,1}(f) = 2\Re\left(a_{2}^{2}\overline{a_{3}}\right) - 2|a_{2}|^{2} - |a_{3}|^{2} + 1.
\end{equation}

\begin{theo}\label{T4}
Let $0<\alpha\le 1$. If $f\in\mathcal{S}_{ex}^{\ast}$ is given by \eqref{eq1}, then the third-order Hermitian--Toeplitz determinant satisfies the sharp inequalities
\begin{equation}\label{ND26}
\mathcal{B}_L(\alpha)\le T_{3,1}(f)\le 1,
\end{equation}
where the sharp lower bound $\mathcal{B}_L(\alpha)$ is defined by
\begin{equation}\label{ND27}
\mathcal{B}_L(\alpha)=
\begin{cases}
1-2\alpha^2+\dfrac{15\alpha^4}{16}, & 0<\alpha<\alpha_H,\\[6pt]
1-\dfrac{\alpha^2}{4}-\dfrac{\alpha^2(\alpha+6)^2}{4(15\alpha^2+4\alpha-4)}, & \alpha_H\le\alpha\le1,
\end{cases}
\end{equation}
with
\[
\alpha_H:=\frac{-2+\sqrt{964}}{30}
=\frac{-1+\sqrt{241}}{15}
\approx0.9683.
\]
Both bounds are sharp for all $\alpha\in(0,1]$.
\end{theo}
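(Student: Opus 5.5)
The plan is to reduce $T_{3,1}(f)$ to a two-parameter real expression using \eqref{ND7}--\eqref{ND8} and Lemma~\ref{L1}, and then optimize it exactly.

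\emph{Upper bound.} By \eqref{ND25}, the triangle inequality gives
\[
T_{3,1}(f)-1\le -2|a_2|^2-|a_3|^2+2|a_2|^2|a_3|=-2|a_2|^2(1-|a_3|)-|a_3|^2 .
\]
So it suffices to know $|a_3|\le 1$. This follows from \eqref{ND8}: we have $a_3=\frac{\alpha}{4}\bigl(p_2-\frac{2-3\alpha}{4}p_1^2\bigr)$ with $\frac{2-3\alpha}{4}\in[-\frac14,\frac12)$, so Lemma~\ref{L3} gives $|a_3|\le\alpha/2<1$. Hence $T_{3,1}(f)\le1$. Equality holds when $a_2=a_3=0$, for example for $f_{\rm tri}$ of Theorem~\ref{T1}, where $p_1=p_2=0$.

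\emph{Lower bound.} Under the rotation $f\mapsto e^{-i\theta}f(e^{i\theta}z)$ we have $a_2\mapsto e^{i\theta}a_2$ and $a_3\mapsto e^{2i\theta}a_3$, so $T_{3,1}$ is invariant. We may therefore take $p_1=2x$ with $x\in[0,1]$. Lemma~\ref{L1} with \eqref{ND7}--\eqref{ND8} gives
\[
a_2=\alpha x,\qquad a_3=\tfrac{3\alpha^2}{4}x^2+\tfrac{\alpha}{2}(1-x^2)\tau_2,\quad \tau_2\in\overline{\mathbb D}.
\]
Completing the square in \eqref{ND25}, with $a_2^2=\alpha^2x^2$ real,
\[
T_{3,1}(f)=1-2\alpha^2x^2+\alpha^4x^4-\bigl|a_3-\alpha^2x^2\bigr|^2 .
\]
For fixed $x$, $a_3$ ranges over the closed disk with centre $\frac{3\alpha^2x^2}{4}$ and radius $\frac{\alpha(1-x^2)}{2}$. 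Since the centre lies to the left of $\alpha^2x^2$, the maximal distance is attained at $\tau_2=-1$ and equals $\frac{\alpha^2x^2}{4}+\frac{\alpha(1-x^2)}{2}$. Writing $y=x^2\in[0,1]$, this gives
\[
T_{3,1}(f)\ge G(y):=1-2\alpha^2y+\alpha^4y^2-\Bigl(\tfrac{\alpha}{2}+\tfrac{\alpha(\alpha-2)}{4}y\Bigr)^2 .
\]

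It remains to minimize the quadratic $G$ on $[0,1]$, which I expect to be the main computational step.
\begin{itemize}
\item The leading coefficient is $\alpha^2\bigl(\alpha^2-\frac{(\alpha-2)^2}{16}\bigr)=\frac{\alpha^2(15\alpha^2+4\alpha-4)}{16}$.
\item $G'(0)=-2\alpha^2-\frac{\alpha^2(\alpha-2)}{4}<0$, so the minimum is never at $y=0$.
\item If $15\alpha^2+4\alpha-4\le0$, or the vertex satisfies $y^*\ge1$, the minimum is $G(1)=1-2\alpha^2+\frac{15\alpha^4}{16}$.
\item Otherwise the minimum is at the vertex $y^*=\frac{\alpha+6}{15\alpha^2+4\alpha-4}$. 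The condition $y^*<1$ is $15\alpha^2+3\alpha-10>0$, whose positive root is $\alpha_H=\frac{-1+\sqrt{241}}{15}$.
\item Evaluating $G(0)-G'(0)^2/(4\cdot\text{leading coeff})$ should produce $1-\frac{\alpha^2}{4}-\frac{\alpha^2(\alpha+6)^2}{4(15\alpha^2+4\alpha-4)}$. I would verify this algebra carefully, since the exact form of the formula depends on it.
\end{itemize}

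\emph{Sharpness.} For $\alpha<\alpha_H$, take $x=1$, i.e.\ $p(z)=\frac{1+z}{1-z}$; this is the Koebe-type function of Theorem~\ref{T1}. For $\alpha\ge\alpha_H$, take $\tau_1=\sqrt{y^*}$ and $\tau_2=-1$. Lemma~\ref{L1} supplies the unique $p\in\mathcal P$, and \eqref{pra27} gives the extremal $f$.
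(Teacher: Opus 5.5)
Your overall strategy works. It departs from the paper mainly in the upper bound. The paper writes $T_{3,1}(f)=\frac{1}{256}F(x,y)$ and maximizes $F$ over $[0,4]\times[0,1]$ by ruling out interior critical points and checking all four edges, with a case split at $\alpha=\frac23$. Your estimate $2\Re(a_2^2\overline{a_3})\le 2|a_2|^2|a_3|$, combined with $|a_3|<1$, does this in one line and reads off the equality case $a_2=a_3=0$ directly. For the lower bound, your identity $T_{3,1}(f)=(1-|a_2|^2)^2-|a_3-a_2^2|^2$ and the farthest-point-of-a-disk argument justify the same reduction more cleanly. The paper gets it by taking $\Re\xi=-|\xi|$ and showing monotonicity in $|\xi|$. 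Your $G(y)$ is exactly $\phi(4y)/256$ with $\phi$ as in \eqref{ND35}, so from that point the two proofs coincide.

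Two computations need fixing.

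First, $|a_3|\le\alpha/2$ is false for $\alpha>\frac23$. There $v=\frac{2-3\alpha}{4}<0$, and Lemma~\ref{L3} gives $|p_2-vp_1^2|\le 2-4v=3\alpha$, so $|a_3|\le\frac{3\alpha^2}{4}$, attained by $f_{\rm Koebe}$. This is harmless, since $\frac{3\alpha^2}{4}<1$ and your upper-bound argument still goes through.

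Second, and this one matters, your vertex is off by a factor $2$. From $G'(0)=-\frac{\alpha^2(\alpha+6)}{4}$ and leading coefficient $\frac{\alpha^2(15\alpha^2+4\alpha-4)}{16}$ you get
\[
y^*=\frac{2(\alpha+6)}{15\alpha^2+4\alpha-4}.
\]
Hence $y^*<1$ is equivalent to $15\alpha^2+2\alpha-16>0$, whose positive root is $\alpha_H$. Your condition $15\alpha^2+3\alpha-10>0$ has positive root $\frac{-3+\sqrt{609}}{30}\approx 0.7226$, not $\alpha_H$. Taken at face value, this has two consequences:
\begin{itemize}
\item The transition point moves to $\approx 0.7226$. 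On $(0.7226,\alpha_H)$ you would report the unconstrained vertex value, which lies below $\min_{[0,1]}G$ and is not attained, so sharpness fails there.
\item Your extremal choice $\tau_1=\sqrt{y^*}$ misses the true minimizer by a factor $\sqrt2$ even for $\alpha\ge\alpha_H$.
\end{itemize}
With the factor $2$ restored, everything is consistent. You get $G(1)=1-2\alpha^2+\frac{15\alpha^4}{16}$, and $G(0)-G'(0)^2\big/\bigl(\frac{\alpha^2(15\alpha^2+4\alpha-4)}{4}\bigr)$ equals the stated second branch. The two branches agree at $\alpha=\alpha_H$, where $y^*=1$.
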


\begin{proof}
For $f\in\mathcal{S}_{ex}^{\ast}$, the coefficient expansions are
\begin{equation}\label{ND28}
a_2=\frac{\alpha p_1}{2},
\qquad
a_3=\frac{\alpha p_2}{4}+\frac{3\alpha^2-2\alpha}{16}p_1^2.
\end{equation}
Using the Libera--Z{\l}otkiewicz parametrization
\[
2p_2=p_1^2+(4-p_1^2)\xi,\qquad \xi\in\overline{\mathbb{D}},
\]
we may write
\[
a_3=\frac{3\alpha^2}{16}p_1^2+\frac{\alpha}{8}(4-p_1^2)\xi.
\]
By rotational invariance, assume $p_1\in[0,2]$ and set $x:=p_1^2\in[0,4]$. Substituting the expressions for $a_2$ and $a_3$ into \eqref{ND25} gives
\begin{equation}\label{T31expr}
T_{3,1}(f)
=
\frac{1}{256}\Bigl(
256-128\alpha^2 x+15\alpha^4 x^2
+4\alpha^3 x(4-x)\Re\xi
-4\alpha^2(4-x)^2|\xi|^2
\Bigr).
\end{equation}

\subsection*{Upper bound}
For fixed $x$, the coefficient of $\Re\xi$ is $4\alpha^3 x(4-x)\ge0$. Hence the numerator in \eqref{T31expr} is maximized when $\Re\xi=|\xi|=:y$ with $y\in[0,1]$. Define
\[
F(x,y)
=
256-128\alpha^2 x+15\alpha^4 x^2
+4\alpha^3 x(4-x)y
-4\alpha^2(4-x)^2 y^2.
\]
Then $T_{3,1}(f)\le\frac1{256}F(x,y)$, and we maximize $F$ on $[0,4]\times[0,1]$.

The partial derivatives are
\begin{align*}
\frac{\partial F}{\partial y}
&=
4\alpha^2(4-x)\bigl(\alpha x-2(4-x)y\bigr),\\
\frac{\partial F}{\partial x}
&=
-128\alpha^2+30\alpha^4 x
+4\alpha^3(4-2x)y
+8\alpha^2(4-x)y^2.
\end{align*}
Setting $\frac{\partial F}{\partial y}=0$ gives
\[
y_c=\frac{\alpha x}{2(4-x)}
\]
for $x\in(0,4)$. Substituting this into $\frac{\partial F}{\partial x}=0$ yields $x_c=4/\alpha^2$. Since $0<\alpha\le1$, we have $x_c\ge4$, so $F$ has no interior critical point. Thus the maximum occurs on the boundary.

On the left boundary $x=0$,
\[
F(0,y)=256-64\alpha^2 y^2\le256,
\]
with equality at $y=0$. On the right boundary $x=4$,
\[
F(4,y)=256-512\alpha^2+240\alpha^4<256.
\]
On the lower boundary $y=0$,
\[
F(x,0)=256-128\alpha^2 x+15\alpha^4 x^2,
\]
and
\[
\frac{d}{dx}F(x,0)
=
-128\alpha^2+30\alpha^4 x
\le
-128\alpha^2+120\alpha^4
=
-8\alpha^2(16-15\alpha^2)<0,
\]
so $F(x,0)$ is decreasing and its maximum is $F(0,0)=256$.

On the upper boundary $y=1$,
\[
F(x,1)
=
256-64\alpha^2
+(-96\alpha^2+16\alpha^3)x
+(15\alpha^4-4\alpha^3-4\alpha^2)x^2.
\]
Set $h(x)=F(x,1)$. If $0<\alpha\le\frac23$, then $h$ is concave with vertex at a negative $x$-coordinate, so $h$ is decreasing on $[0,4]$, and
\[
\max_{[0,4]}h=h(0)=256-64\alpha^2<256.
\]
If $\frac23<\alpha\le1$, then $h$ is convex, so its maximum on $[0,4]$ is attained at an endpoint. Since
\[
h(0)=256-64\alpha^2<256,
\qquad
h(4)=256-512\alpha^2+240\alpha^4<256,
\]
we again have $h(x)<256$ for all $x\in[0,4]$.

Therefore the global maximum of $F$ is $256$, attained at $(x,y)=(0,0)$. Dividing by $256$ gives $T_{3,1}(f)\le1$.

Equality is attained by the $3$-fold symmetric function
\[
f_{\rm tri}(z)
=
z\exp\left(\int_0^z\frac{e^{\alpha t^3}-1}{t}\,dt\right),
\]
for which $p_1=p_2=0$, hence $a_2=a_3=0$ and $T_{3,1}(f_{\rm tri})=1$.

\subsection*{Lower bound}
To minimize $T_{3,1}(f)$, we use the opposite extremal choice $\Re\xi=-|\xi|=-y$, $y\in[0,1]$. Then the numerator becomes
\[
N(x,y)
=
256-128\alpha^2 x+15\alpha^4 x^2
-4\alpha^3 x(4-x)y
-4\alpha^2(4-x)^2 y^2.
\]
For fixed $x\in[0,4)$,
\[
\frac{\partial N}{\partial y}
=
-4\alpha^3 x(4-x)-8\alpha^2(4-x)^2 y<0,
\]
so the minimum in $y$ occurs at $y=1$. Thus the lower bound is obtained from
\begin{equation}\label{ND35}
\phi(x):=N(x,1)
=
(15\alpha^4+4\alpha^3-4\alpha^2)x^2
-(16\alpha^3+96\alpha^2)x
+256-64\alpha^2,
\qquad x\in[0,4].
\end{equation}
Its derivative is
\[
\phi'(x)
=
2(15\alpha^4+4\alpha^3-4\alpha^2)x
-(16\alpha^3+96\alpha^2).
\]
At $x=4$,
\[
\phi'(4)=8\alpha^2(15\alpha^2+2\alpha-16).
\]
The quadratic $15\alpha^2+2\alpha-16$ has the unique positive root
\[
\alpha_H
=
\frac{-2+\sqrt{964}}{30}
=
\frac{-1+\sqrt{241}}{15}
\approx0.9683.
\]

If $0<\alpha<\alpha_H$, then $\phi'(4)<0$. Since $\phi'$ is linear, this implies $\phi'(x)<0$ for all $x\in[0,4]$. Hence $\phi$ is decreasing, and
\[
\min_{[0,4]}\phi=\phi(4)
=
256-512\alpha^2+240\alpha^4.
\]
Dividing by $256$ gives
\[
\mathcal{B}_L(\alpha)
=
1-2\alpha^2+\frac{15\alpha^4}{16}.
\]

If $\alpha_H\le\alpha\le1$, then $\phi'(4)\ge0$. Since the leading coefficient $15\alpha^4+4\alpha^3-4\alpha^2$ is positive, $\phi$ is convex, and $\phi'$ has exactly one zero in $(0,4]$, namely
\[
x_{\rm v}
=
\frac{8(\alpha+6)}{15\alpha^2+4\alpha-4}.
\]
Thus the minimum occurs at $x_{\rm v}$. Evaluating $\phi(x_{\rm v})$ gives
\[
\phi(x_{\rm v})
=
256-64\alpha^2-\frac{64\alpha^2(\alpha+6)^2}{15\alpha^2+4\alpha-4},
\]
so
\[
\mathcal{B}_L(\alpha)
=
1-\frac{\alpha^2}{4}
-\frac{\alpha^2(\alpha+6)^2}{4(15\alpha^2+4\alpha-4)}.
\]
For $\alpha=1$, this reduces to $-\frac1{15}$, in agreement with the known bound for $\mathcal{S}_e^*$.

\subsection*{Sharpness of the lower bound}
The lower bound is attained when $y=|\xi|=1$ and $\Re\xi=-1$.

For $0<\alpha<\alpha_H$, choose $p_1=2$, equivalently $x=4$, and $\xi=-1$. This gives the Koebe-type function
\[
f_{\rm Koebe}(z)
=
z\exp\left(\int_0^z\frac{e^{\alpha t}-1}{t}\,dt\right),
\]
for which
\[
T_{3,1}(f_{\rm Koebe})
=
1-2\alpha^2+\frac{15\alpha^4}{16}.
\]

For $\alpha_H\le\alpha\le1$, choose
\[
p_1=\sqrt{x_{\rm v}},
\qquad
\xi=-1.
\]
The Libera--Z{\l}otkiewicz theorem guarantees the existence of a Carath\'eodory function with these parameters. The associated function $f\in\mathcal{S}_{ex}^{\ast}$ then attains the stated lower bound.

This completes the proof.
\end{proof}

\section{Generalized Fekete--Szeg\H{o} Functional for the Class $\mathcal{S}_{ex}^{\ast}$}

Lecko and Partyka \cite{Lecko2024} introduced the generalized Fekete--Szeg\H{o} functional
\begin{equation}\label{FG}
F_{\lambda,\mu}(f)
=
\bigl|a_3-\lambda a_2^2\bigr|
-
\mu |a_2|,
\qquad
\lambda\in\mathbb{C},\quad \mu>0,
\end{equation}
for functions in the univalent class $\mathcal{S}$. Later, Bulboac\u{a} et al.\ \cite{Bulboaca2025} studied this functional for the class $\mathcal{S}$ and its convex subclass. In this section, we give a complete solution for the exponential Ma--Minda starlike class $\mathcal{S}_{ex}^{\ast}$.

\begin{theo}\label{T5}
Let $0<\alpha\le1$, $\lambda\in\mathbb{C}$, and $\mu>0$. If $f\in\mathcal{S}_{ex}^{\ast}$ is given by \eqref{eq1}, then
\begin{equation}\label{FGbounds}
\mathcal{B}_L
\le
|a_3-\lambda a_2^2|-\mu|a_2|
\le
\mathcal{B}_U,
\end{equation}
where
\begin{equation}\label{FGU}
\mathcal{B}_U
=
\begin{cases}
\dfrac{\alpha}{2}, & \alpha|3-4\lambda| < 2+4\mu,\\[6pt]
\dfrac{\alpha^2|3-4\lambda|-4\alpha\mu}{4}, & \alpha|3-4\lambda| \ge 2+4\mu,
\end{cases}
\end{equation}
and
\begin{equation}\label{FGL}
\mathcal{B}_L
=
\begin{cases}
-\dfrac{\alpha}{4}\bigl(4\mu-\alpha|3-4\lambda|\bigr), & \alpha|3-4\lambda| \le 2\mu-2,\\[8pt]
-\alpha\mu\sqrt{\dfrac{2}{\alpha|3-4\lambda|+2}}, & \alpha|3-4\lambda| \ge 2\mu^2-2,\\[8pt]
-\dfrac{\alpha}{2}-\dfrac{\alpha\mu^2}{\alpha|3-4\lambda|+2}, & 2\mu-2<\alpha|3-4\lambda|<2\mu^2-2,
\end{cases}
\end{equation}
whenever the corresponding region is nonempty. All bounds are sharp.
\end{theo}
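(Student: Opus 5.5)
The plan is to reduce the functional to the form handled by Lemma~\ref{L6}, exactly as in the proof of Theorem~\ref{T2}. From \eqref{ND7} and \eqref{ND8} we have $a_2=\frac{\alpha p_1}{2}$ and $a_3=\frac{\alpha p_2}{4}+\frac{3\alpha^2-2\alpha}{16}p_1^2$. Hence
\[
a_3-\lambda a_2^2=\frac{\alpha}{16}\Bigl(4p_2+\bigl(3\alpha-2-4\lambda\alpha\bigr)p_1^2\Bigr)
=\frac{\alpha}{16}\Bigl(4p_2+\bigl(\alpha(3-4\lambda)-2\bigr)p_1^2\Bigr).
\]
Therefore
\[
|a_3-\lambda a_2^2|-\mu|a_2|=\frac{\alpha}{16}\Bigl(\bigl|Kp_1^2+Lp_2\bigr|-J|p_1|\Bigr),
\qquad K=\alpha(3-4\lambda)-2,\quad L=4,\quad J=8\mu .
\]
Lemma~\ref{L6} allows complex $K$, real $L$ and $J\ge0$, so it applies directly. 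The relevant quantities are
\[
2K+L=2\alpha(3-4\lambda),\qquad M=|4K+2L|=4\alpha|3-4\lambda|,\qquad 2|L|=8 .
\]

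\textbf{Upper bound.} The first branch of Lemma~\ref{L6} requires $|2K+L|\ge |L|+J$, which reads $2\alpha|3-4\lambda|\ge 4+8\mu$, i.e. $\alpha|3-4\lambda|\ge 2+4\mu$. In that case we get
\[
\Phi\le 4\alpha|3-4\lambda|-16\mu,
\]
and multiplying by $\alpha/16$ gives $\frac{\alpha^2|3-4\lambda|-4\alpha\mu}{4}$. Otherwise $\Phi\le 8$, which gives $\alpha/2$. This reproduces \eqref{FGU}.

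\textbf{Lower bound.} Each branch of the second estimate of Lemma~\ref{L6} translates as follows.
\begin{itemize}
\item The condition $J\ge M+2|L|$ becomes $8\mu\ge 4\alpha|3-4\lambda|+8$, i.e. $\alpha|3-4\lambda|\le 2\mu-2$. The bound $-(2J-M)\alpha/16$ gives $-\frac{\alpha}{4}(4\mu-\alpha|3-4\lambda|)$.
\item The condition $J^2\le 2|L|(M+2|L|)$ becomes $64\mu^2\le 8(4\alpha|3-4\lambda|+8)$, i.e. $\alpha|3-4\lambda|\ge 2\mu^2-2$. Here
\[
2J\sqrt{\tfrac{8}{M+8}}=16\mu\sqrt{\tfrac{2}{\alpha|3-4\lambda|+2}},
\]
which yields the second line of \eqref{FGL}.
\item In the remaining case, $8+\frac{64\mu^2}{4\alpha|3-4\lambda|+8}$ times $\alpha/16$ gives $\frac{\alpha}{2}+\frac{\alpha\mu^2}{\alpha|3-4\lambda|+2}$, which yields the third line.
\end{itemize}
The phrase ``whenever the region is nonempty'' accounts for the overlap or ordering of $2\mu-2$ and $2\mu^2-2$. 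I would remark that the three conditions are exactly Lemma~\ref{L6}'s case split, so the branches are consistent wherever they apply.

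\textbf{Sharpness.} This is the main obstacle, since Lemma~\ref{L6} is quoted without equality cases, and with complex $\lambda$ the phases matter. I plan to exhibit explicit $p\in\mathcal{P}$ and feed them into \eqref{pra27}.
\begin{itemize}
\item Bound $\alpha/2$: take $p(z)=\frac{1+z^2}{1-z^2}$, so $p_1=0$, $p_2=2$, giving $|a_3|=\alpha/2$ and $a_2=0$ (the function $f_{\rm odd}$).
\item Upper bound in its first branch, and lower bound in its first branch: take $p(z)=\frac{1+\varepsilon z}{1-\varepsilon z}$, so $p_1=2\varepsilon$, $p_2=2\varepsilon^2$ with $|\varepsilon|=1$. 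Then $|Kp_1^2+Lp_2|=4|K+2|=4\alpha|3-4\lambda|$, and the rotation is irrelevant.
\item Two interior branches of the lower bound: use Lemma~\ref{L1} with $\tau_1=t\in(0,1)$ and $|\tau_2|=1$. Then $Kp_1^2+Lp_2=4t^2(K+2)+8(1-t^2)\tau_2$, and I choose $\tau_2$ so that the two terms have opposite phases, giving modulus $|8-4t^2(K+2+2)|$-type expressions. For the second branch, $t$ is chosen so that this modulus vanishes, with $t^2=\frac{2}{\alpha|3-4\lambda|+2}$, mirroring the spiral function of Theorem~\ref{T2}. For the third branch, $t=\frac{J}{M+2|L|}$-type, with $t^2=\mu^2/(\alpha|3-4\lambda|+2)^2\cdot$(const).
\end{itemize}
I will verify these $t$ lie in $[0,1]$ exactly under the stated region conditions; this check is where the case boundaries $2\mu-2$ and $2\mu^2-2$ reappear.
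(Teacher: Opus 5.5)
Your proposal is correct and follows the paper's proof essentially step for step. You use the same reduction to the Sim--Thomas functional of Lemma~\ref{L6}; your $K,L,J$ are just $4$ times the paper's $K=\frac{\alpha(3-4\lambda)-2}{4}$, $L=1$, $J=2\mu$. You also use the same translation of cases and the same extremal choices: $p_1=0,p_2=2$; then $p_1=p_2=2$; then $\tau_1=t$, $\tau_2=-\eta$ with $t^2=\frac{2}{D+2}$ or $t=\frac{2\mu}{D+2}$, where $D=\alpha|3-4\lambda|$ and $\eta$ is the phase of $3-4\lambda$.

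Two details in the sharpness sketch need tidying. With $\tau_2=-\eta$, the modulus is $\bigl|8-4(D+2)t^2\bigr|$, not the garbled expression you wrote. Also, the condition $D<2\mu^2-2$ enters the third branch through the sign of $\frac{16\mu^2}{D+2}-8$, not through admissibility of $t$. Admissibility of $t$ uses only $D>2\mu-2$, and in the second branch it holds for every $D\ge 0$.
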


\begin{proof}
For $f\in\mathcal{S}_{ex}^{\ast}$, write
\[
\frac{zf'(z)}{f(z)}
=
e^{\alpha\omega(z)},
\qquad
\omega
=
\frac{p-1}{p+1},
\quad
p\in\mathcal{P}.
\]
Comparing coefficients gives
\begin{equation}\label{FGa2a3}
a_2=\frac{\alpha p_1}{2},
\qquad
a_3
=
\frac{\alpha p_2}{4}
+
\frac{3\alpha^2-2\alpha}{16}p_1^2.
\end{equation}
Substituting \eqref{FGa2a3} into the functional yields
\begin{equation}\label{FGPhi}
\begin{aligned}
|a_3-\lambda a_2^2|-\mu|a_2|
&=
\left|
\frac{\alpha p_2}{4}
+
\frac{3\alpha^2-2\alpha-4\lambda\alpha^2}{16}p_1^2
\right|
-\frac{\alpha\mu}{2}|p_1|\\
&=
\frac{\alpha}{4}
\left(
\left|
p_2+\frac{\alpha(3-4\lambda)-2}{4}p_1^2
\right|
-2\mu|p_1|
\right)\\
&=
\frac{\alpha}{4}\Phi(p_1,p_2),
\end{aligned}
\end{equation}
where
\[
\Phi(p_1,p_2)
=
|Lp_2+Kp_1^2|-J|p_1|,
\]
with
\[
K=\frac{\alpha(3-4\lambda)-2}{4},
\qquad
L=1,
\qquad
J=2\mu.
\]
Set
\[
D=\alpha|3-4\lambda|,
\qquad
\eta=\frac{3-4\lambda}{|3-4\lambda|}
\quad
\text{if }D>0,
\]
and take $\eta=1$ if $D=0$. Then
\[
\alpha(3-4\lambda)=D\eta.
\]
The quantities needed in Lemma~\ref{L6} are
\[
|2K+L|=\frac{D}{2},
\qquad
M=|4K+2L|=D.
\]

\subsection*{Upper bound}
By Lemma~\ref{L6}, if
\[
|2K+L|<|L|+J,
\]
then
\[
D<2+4\mu,
\]
and
\[
\Phi\le 2|L|=2.
\]
Hence
\[
|a_3-\lambda a_2^2|-\mu|a_2|
\le
\frac{\alpha}{2}.
\]

If
\[
|2K+L|\ge |L|+J,
\]
then
\[
D\ge2+4\mu,
\]
and
\[
\Phi\le |4K+2L|-2J=D-4\mu.
\]
Thus
\[
|a_3-\lambda a_2^2|-\mu|a_2|
\le
\frac{\alpha^2|3-4\lambda|-4\alpha\mu}{4}.
\]
This proves \eqref{FGU}.

\subsection*{Lower bound}
For the lower bound, we use the estimates for $-\Phi$ from Lemma~\ref{L6}.

If
\[
J\ge M+2|L|,
\]
then
\[
2\mu\ge D+2,
\]
equivalently
\[
D\le2\mu-2.
\]
In this case,
\[
-\Phi\le 2J-M=4\mu-D,
\]
and hence
\[
|a_3-\lambda a_2^2|-\mu|a_2|
\ge
-\frac{\alpha}{4}\bigl(4\mu-\alpha|3-4\lambda|\bigr).
\]

If
\[
J^2\le2|L|(M+2|L|),
\]
then
\[
4\mu^2\le2(D+2),
\]
equivalently
\[
D\ge2\mu^2-2.
\]
Then
\[
-\Phi
\le
2J\sqrt{\frac{2|L|}{M+2|L|}}
=
4\mu\sqrt{\frac{2}{D+2}},
\]
so
\[
|a_3-\lambda a_2^2|-\mu|a_2|
\ge
-\alpha\mu\sqrt{\frac{2}{D+2}}.
\]

If
\[
2\mu-2<D<2\mu^2-2,
\]
which can occur only when $\mu>1$, then Lemma~\ref{L6} gives
\[
-\Phi
\le
2|L|+\frac{J^2}{M+2|L|}
=
2+\frac{4\mu^2}{D+2}.
\]
Therefore
\[
|a_3-\lambda a_2^2|-\mu|a_2|
\ge
-\frac{\alpha}{2}-\frac{\alpha\mu^2}{D+2}.
\]
These three cases give \eqref{FGL}.

\subsection*{Sharpness}
We exhibit Carath\'eodory parameters attaining equality in each branch.

\paragraph{Upper bound, first branch: $D<2+4\mu$.}
Choose
\[
p_1=0,\qquad p_2=2.
\]
Then $a_2=0$, $a_3=\alpha/2$, and
\[
|a_3-\lambda a_2^2|-\mu|a_2|
=
\frac{\alpha}{2}.
\]
The extremal function is
\[
f_{\mathrm{odd}}(z)
=
z\exp\left(\int_0^z\frac{e^{\alpha t^2}-1}{t}\,dt\right).
\]

\paragraph{Upper bound, second branch: $D\ge2+4\mu$.}
Choose
\[
p_1=2,\qquad p_2=2.
\]
Then
\[
a_2=\alpha,
\qquad
a_3=\frac{3\alpha^2}{4},
\]
and
\[
|a_3-\lambda a_2^2|-\mu|a_2|
=
\frac{\alpha^2|3-4\lambda|}{4}-\alpha\mu.
\]
The extremal function is the Koebe-type function
\[
f_{\mathrm{Koebe}}(z)
=
z\exp\left(\int_0^z\frac{e^{\alpha t}-1}{t}\,dt\right).
\]

\paragraph{Lower bound, first branch: $D\le2\mu-2$.}
Again choose $p_1=p_2=2$. Then
\[
|a_3-\lambda a_2^2|-\mu|a_2|
=
\frac{\alpha D}{4}-\alpha\mu
=
-\frac{\alpha}{4}(4\mu-D).
\]

\paragraph{Lower bound, second branch: $D\ge2\mu^2-2$.}
Set
\[
p_1=2\sqrt{\frac{2}{D+2}},
\qquad
\tau_2=-\eta.
\]
By Lemma~\ref{L1}, this choice is admissible. The corresponding second coefficient is
\[
p_2
=
\frac{4-2D\eta}{D+2}.
\]
A direct calculation gives
\[
Kp_1^2+p_2=0,
\]
so
\[
\Phi(p_1,p_2)
=
-J|p_1|
=
-4\mu\sqrt{\frac{2}{D+2}}.
\]
Therefore
\[
|a_3-\lambda a_2^2|-\mu|a_2|
=
-\alpha\mu\sqrt{\frac{2}{D+2}}.
\]

\paragraph{Lower bound, third branch: $2\mu-2<D<2\mu^2-2$.}
Set
\[
x=\frac{2\mu}{D+2},
\qquad
p_1=2x,
\qquad
\tau_2=-\eta.
\]
Since $D>2\mu-2$, we have $x<1$, so this choice is admissible. The second coefficient is
\[
p_2=2x^2-2\eta(1-x^2),
\]
and a computation gives
\[
Kp_1^2+p_2
=
\eta\left(\frac{4\mu^2}{D+2}-2\right).
\]
Because $D<2\mu^2-2$,
\[
|Kp_1^2+p_2|
=
\frac{4\mu^2}{D+2}-2.
\]
Also,
\[
J|p_1|
=
\frac{8\mu^2}{D+2}.
\]
Thus
\[
\Phi(p_1,p_2)
=
-2-\frac{4\mu^2}{D+2},
\]
and hence
\[
|a_3-\lambda a_2^2|-\mu|a_2|
=
-\frac{\alpha}{2}-\frac{\alpha\mu^2}{D+2}.
\]

This completes the proof.
\end{proof}
\section*{Conclusion}

We have obtained sharp coefficient estimates for the parametric exponential Ma--Minda starlike class
\[
\mathcal{S}_{ex}^{\ast}
=
\left\{
f\in\mathcal{S}:
\frac{zf'(z)}{f(z)}
\prec e^{\alpha z},\ 0<\alpha\le1
\right\}.
\]
The main results are summarized in Table~\ref{tab:summary}. For the inverse logarithmic coefficients, we proved the sharp bounds
\[
|\Gamma_1|\le \frac{\alpha}{2},
\qquad
|\Gamma_2|\le
\begin{cases}
\dfrac{\alpha}{4}, & 0<\alpha\le\dfrac23,\\[4pt]
\dfrac{3\alpha^2}{8}, & \dfrac23<\alpha\le1,
\end{cases}
\]
and established a four-branch structure for \(|\Gamma_3|\), with transition points
\[
\alpha_\star\approx0.542712,\qquad
\alpha_b\approx0.679103,\qquad
\alpha_c\approx0.691095.
\]
Sharp bounds were also obtained for the difference \(|\Gamma_2|-|\Gamma_1|\), the inverse logarithmic Hankel determinant \(H_{2,1}(F_{f^{-1}}/2)\), and the Hermitian--Toeplitz determinant \(T_{3,1}(f)\), whose lower bound changes at
\[
\alpha_H=\frac{-1+\sqrt{241}}{15}\approx0.9683.
\]
Finally, we completely solved the generalized Fekete--Szeg\H{o} problem for \(\mathcal{S}_{ex}^{\ast}\).

All bounds are sharp, and the corresponding extremal functions are explicitly constructed from the integral representation
\[
f(z)
=
z\exp\left(
\int_0^z \frac{1}{t}
\left[
\exp\left(
\alpha\frac{p(t)-1}{p(t)+1}
\right)
-1
\right]
dt
\right).
\]
For \(\alpha=1\), our results reduce to and extend several known estimates for the exponential starlike class \(\mathcal{S}_e^*\). The method is based on the Carath\'eodory parametrization and the optimization lemmas of Choi--Kim--Sugawa and Sim--Thomas, and is likely to apply to other Ma--Minda subclasses as well.

\begin{table}[htbp]
\centering
\caption{Summary of sharp bounds and extremal functions for \(\mathcal{S}_{ex}^{\ast}\).}
\label{tab:summary}
\setlength{\tabcolsep}{3pt}
\scriptsize
\begin{tabular}{|p{1.8cm}|p{3.2cm}|p{3.8cm}|p{4.4cm}|}
\hline
\textbf{Result} & \textbf{Parameter range} & \textbf{Sharp bound} & \textbf{Extremal function/parameters} \\
\hline
\(|\Gamma_1|\) & \(0<\alpha\le1\) & \(\displaystyle\frac{\alpha}{2}\) & \(f_{\rm Koebe}\), \(p_1=2\) \\
\hline
\multirow{2}{*}{\(|\Gamma_2|\)}
& \(0<\alpha\le\frac23\) & \(\displaystyle\frac{\alpha}{4}\) & \(f_{\rm odd}\), \(p_1=0,\ p_2=2\) \\
\cline{2-4}
& \(\frac23<\alpha\le1\) & \(\displaystyle\frac{3\alpha^2}{8}\) & \(f_{\rm Koebe}\), \(p_1=p_2=2\) \\
\hline
\multirow{4}{*}{\(|\Gamma_3|\)}
& \(0<\alpha\le\alpha_\star\) & \(\displaystyle\frac{\alpha}{6}\) & \(f_{\rm tri}\), \(\tau_1=\tau_2=0,\ \tau_3=1\) \\
\cline{2-4}
& \(\alpha_\star<\alpha\le\alpha_b\) & \(\displaystyle\frac{\alpha(7\alpha+2)}{18}\sqrt{\frac{2(7\alpha+2)}{29\alpha^2+42\alpha+12}}\) & \(\tau_1=\tau_d^*,\ \tau_2=1\) \\
\cline{2-4}
& \(\alpha_b<\alpha<\alpha_c\) & \(\displaystyle\frac{29\alpha(49\alpha^2-16)^{3/2}}{2232\sqrt{29\alpha^2-12}}\) & \(\tau_1=\sqrt{x_e^*},\ \tau_2=e^{i\theta}\), \(\cos\theta=\frac{7[12-(29\alpha^2+12)x_e^*]}{232\alpha x_e^*}\) \\
\cline{2-4}
& \(\alpha_c\le\alpha\le1\) & \(\displaystyle\frac{29\alpha^3}{72}\) & \(f_{\rm Koebe}\), \(p_1=p_2=p_3=2\) \\
\hline
\multirow{2}{*}{\(|\Gamma_2|-|\Gamma_1|\)}
& \(0<\alpha\le1\), upper & \(\displaystyle\frac{\alpha}{4}\) & \(f_{\rm odd}\), \(p_1=0,\ p_2=2\) \\
\cline{2-4}
& \(0<\alpha\le1\), lower & \(\displaystyle-\frac{\alpha}{2}\sqrt{\frac{2}{3\alpha+2}}\) & \(f_{\rm spiral}\), \(\omega(z)=\frac{z(\sqrt2+\sqrt{3\alpha+2}z)}{\sqrt{3\alpha+2}+\sqrt2 z}\) \\
\hline
\multirow{2}{*}{\(|H_{2,1}(F_{f^{-1}}/2)|\)}
& \(0<\alpha\le\frac25\) & \(\displaystyle\frac{\alpha^2}{16}\) & \(f_{\rm odd}\), \(p_1=0,\ p_2=2\) \\
\cline{2-4}
& \(\frac25<\alpha\le1\) & \(\displaystyle\frac{\alpha^2(15\alpha^2+10\alpha+4)}{4(35\alpha^2+60\alpha+12)}\) & \(f_{\rm Hankel}\), \(\tau_1=\sqrt{y_v},\ \tau_2=1\) \\
\hline
\multirow{3}{*}{\(T_{3,1}(f)\)}
& \(0<\alpha\le1\), upper & \(1\) & \(f_{\rm tri}\), \(p_1=p_2=0\) \\
\cline{2-4}
& \(0<\alpha<\alpha_H\), lower & \(\displaystyle1-2\alpha^2+\frac{15\alpha^4}{16}\) & \(f_{\rm Koebe}\), \(p_1=2,\ \xi=-1\) \\
\cline{2-4}
& \(\alpha_H\le\alpha\le1\), lower & \(\displaystyle1-\frac{\alpha^2}{4}-\frac{\alpha^2(\alpha+6)^2}{4(15\alpha^2+4\alpha-4)}\) & \(p_1=\sqrt{x_v},\ \xi=-1\) \\
\hline
\multirow{5}{*}{\(|a_3-\lambda a_2^2|-\mu|a_2|\)}
& \(D<2+4\mu\), upper & \(\displaystyle\frac{\alpha}{2}\) & \(f_{\rm odd}\), \(p_1=0,\ p_2=2\) \\
\cline{2-4}
& \(D\ge2+4\mu\), upper & \(\displaystyle\frac{\alpha D-4\alpha\mu}{4}\) & \(f_{\rm Koebe}\), \(p_1=p_2=2\) \\
\cline{2-4}
& \(D\le2\mu-2\), lower & \(\displaystyle-\frac{\alpha}{4}(4\mu-D)\) & \(f_{\rm Koebe}\), \(p_1=p_2=2\) \\
\cline{2-4}
& \(D\ge2\mu^2-2\), lower & \(\displaystyle-\alpha\mu\sqrt{\frac{2}{D+2}}\) & \(p_1=2\sqrt{\frac{2}{D+2}},\ \tau_2=-\eta\) \\
\cline{2-4}
& \(2\mu-2<D<2\mu^2-2\), lower & \(\displaystyle-\frac{\alpha}{2}-\frac{\alpha\mu^2}{D+2}\) & \(p_1=\frac{4\mu}{D+2},\ \tau_2=-\eta\) \\
\hline
\end{tabular}
\end{table}

\smallskip
\noindent
In Table~\ref{tab:summary}, the constants are defined by
\[
D=\alpha|3-4\lambda|,
\qquad
\eta=
\begin{cases}
\dfrac{3-4\lambda}{|3-4\lambda|}, & \lambda\neq\dfrac34,\\[8pt]
1, & \lambda=\dfrac34,
\end{cases}
\]
\[
\alpha_H=\frac{-1+\sqrt{241}}{15},
\qquad
y_v=\frac{6(5\alpha-2)}{35\alpha^2+60\alpha+12},
\]
and \(\alpha_\star,\alpha_b,\alpha_c,\tau_d^*,x_e^*,\theta\) are as defined in Theorem~\ref{T1}. The functions \(f_{\rm tri}\), \(f_{\rm odd}\), \(f_{\rm Koebe}\), \(f_{\rm spiral}\), and \(f_{\rm Hankel}\) are obtained from the general integral representation displayed above, with the corresponding Carath\'eodory parameters specified in the table.

\section*{Declarations}

\subsection*{Funding}
The first author acknowledges financial support from the Council of Scientific and Industrial Research (CSIR), New Delhi, India, under Grant No.\ 09/1224(16975)/2023-EMR-I.

\subsection*{Data Availability Statement}
Data sharing is not applicable to this article as no datasets were generated or analyzed during the current study.

\subsection*{Conflict of Interest and Author Contributions}
Both authors contributed equally to this work. The authors declare that they have no conflict of interest.

\end{document}